\numberwithin{equation}{section}
\def\n{\mathbb{N}}
\def\N{\mathbb{N}}
\def\z{\mathbb{Z}}
\def\Z{\mathbb{Z}}
\def\c{\mathbb{C}}
\def\dim{\hbox{dim}}
\newfont{\df}{eufm10}
\def\dim{\hbox{\rm dim}\,}
\def\Vir{\hbox{\rm Vir}}
\def\span{\hbox{\rm span}\,}
\numberwithin{equation}{section}
\def\BE{\begin{equation}}
\def\EE{\end{equation}}
\def\BA{\begin{eqnarray}}
\def\EA{\end{eqnarray}}
\title[Simple weight modules]{Classification of simple  modules with finite-dimensional weight spaces  for the $N=2$ Ramond algebra
}
\date{}
\author{Dong Liu}
\address{Department of Mathematics, Huzhou University, Zhejiang Huzhou, 313000, China}
\email{liudong@zjhu.edu.cn}
\author{Yufeng Pei}
\address{Department of Mathematics, Shanghai Normal University,
Shanghai, 200234, China} \email{pei@shnu.edu.cn}
\author{Limeng Xia$^\sharp$}
\address{Institute of Applied System Analysis, Jiangsu University, Jiangsu Zhenjiang, 212013,
China}\email{xialimeng@ujs.edu.cn}
\begin{document}

\begin{abstract}  In this paper, we classify all simple weight modules with finite-dimensional weight spaces over the $N=2$ Ramond algebra. Any such module $V$ is either a simple  highest weight module or a simple lowest weight module, or  a simple cuspidal module with ${\rm b}(V)\le 2$.
\end{abstract}

\newtheorem{theo}{Theorem}[section]
\newtheorem{defi}[theo]{Definition}
\newtheorem{lemm}[theo]{Lemma}
\newtheorem{coro}[theo]{Corollary}
\newtheorem{prop}[theo]{Proposition}
\newtheorem{rema}[theo]{Remark}

\thanks{$^\sharp$ the corresponding author}

\subjclass[2010]{17B10, 17B65, 17B68, 17B70}

\keywords{Virasoro algebra, N=2 Ramond algebra, weight module}

\maketitle

\section{Introduction}
\setcounter{section}{1}\setcounter{theo}{0} \setcounter{equation}{0}

Representation theory of the Virasoro algebra   has produced a vast amount of applications  in  physics and mathematics.  It is well-known that the classification of simple  weight modules with finite-dimensional weight spaces over the Virasoro algebra, conjectured by V. Kac in \cite{Ka}, was completed by Mathieu in \cite{Ma} (see also \cite{KS, S1}). Later, simple weight  modules with finite-dimensional weight spaces for various extensions of the Virasoro algebra were studied (see \cite{S2} for the Weyl algebra, \cite{LvZ0, S4} for high rank Virasoro algebras, \cite{LvZ} for the twisted Heisenberg-Virasoro algebra, \cite{Liu} for the Schr\"{o}dinger-Virasoro algebra, \cite{LG} for the generalized Heisenberg-Virasoro algebra, etc.).  Recently, Billig and  Futorny  classified all simple weight modules with finite-dimensional weight spaces for the Lie algebra $W_n$ of vector fields on an $n$-dimensional torus (see \cite{BF} and the references therein).

Superconformal algebras may be viewed as
natural super-extensions of the Virasoro algebra and have been playing a fundamental role in  string theory and conformal field theory. In \cite{KL} Kac and van de Leur
gave a mathematically rigorous definition of a superconformal algebra as follows: (a) it is  a $\Z$-graded complex Lie
superalgebra, (b) it is graded simple, (c) the dimensions of its graded spaces are uniformly bounded. Furthermore, it contains the Virasoro algebra as a graded subalgebra.  In \cite{Ka1}, Kac classified all physical superconformal algebras:
namely, the $N = 0$ (Virasoro), $N = 1$ (Neveu-Schwarz), $N = 2, 3$ and $4$ superconformal
algebras, the superalgebra $W(2)$ of all vector fields on the $N=2$ supercircle, and
a new superalgebra $CK(6)$. Representation theory of superconformal algebras has been the subject of intensive study (see \cite{D, IK1,KL}, etc.).  It is a challenging problem to give complete classifications of simple weight modules with finite-dimensional weight spaces for superconformal algebras.  In \cite{CP}, all simple unitary weight modules with finite-dimensional weight spaces over the $N=1$ superconformal algebra were classified, which include highest and lowest weight modules. A complete classification for the $N=1$ superconformal algebra  was  given by Su in \cite{S0} (also see \cite{CLL, CL}). Such modules (also named graded modules here) for superconformal algebras were also studied in \cite{MZe}, where some cuspidal (also named uniformly bounded) modules were constructed. The problem of classifying such simple modules over superconformal algebras turns out to be more subtle.

It is well-known that the $N=2$ superconformal algebras fall into four
sectors: the Ramond sector, the Neveu-Schwarz sector, the
topological sector and the twisted sector.
Schwimmer and Seiberg demonstrated that the Neveu-Schwarz sector and Ramond sector are isomorphic in their work \cite{SS}. Dijkgraff et al. introduced the topological sector through topological twist in \cite{DVV}. Interestingly, the topological sector is isomorphic to the untwisted sectors (Neveu-Schwarz), which means there are only two distinct sectors in the $N=2$ superconformal algebras.
The main purpose of the present paper is to study simple weight modules with finite-dimensional weight spaces for the Ramond sector (see \cite{Io, SS}).

  Using the classification result on simple weight modules with finite-dimensional weight spaces  for the twisted Heisenberg-Virasoro algebra $\mathfrak t$ in \cite{LvZ} and some facts of Grassmann algebras,  we classify all simple cuspidal $\mathfrak q^\pm$-modules.
With this classification (Proposition \ref{pssn0}), we can classify all simple cuspidal $\mathfrak g$-modules (uniformly bounded modules) to the combinations of the two pieces of simple $\mathfrak q^\pm$-modules $U^\pm$ as the following diagram (see Proposition \ref{pn=2} for details).

\centerline{\xymatrix{
U^+\ar@{->}[r]\ar@{=}[d]^\cup&\mathfrak q^-U^+\ar@{->}[r]\ar@{<-}[d]^\cup&\cdots\ar@{->}[r]&(\mathfrak q^-)^{n}U^+\ar@{<-}[d]^\cup\\
(\mathfrak q^+)^{n}U^-\ar@{<-}[r]&(\mathfrak q^+)^{n-1}U^-\ar@{<-}[r]&\cdots\ar@{<-}[r]&U^-}}

By direct calculations we get a useful identity $G_{r_1}^-G_{r_2}^-G_{r_3}^-G_{s_1}^+G_{s_2}^+G_{s_3}^+U^-=0, \forall r_i, s_i\in\z, i=1,2,3$ for the $\mathfrak q^\pm$-modules $U^\pm$, and then obtain our main result for the $N=2$ Ramond algebra. The main theorem can be stated as follows.

\noindent{\bf Main Theorem} (see Theorem $\ref{main})$
{\it Let $V$ be a simple weight module with finite-dimensional weight spaces over the $N=2$ Ramond algebra. Then $V$ is a highest weight module, a lowest weight module, or ${\mathcal R}_{a, b, c}'$ for some $a, b, c\in\c$ $($up to parity-change$)$.}

This paper is arranged as follows. In Section 2, we recall some
notations and collect known facts about the $N=2$ Ramond algebra.
In Section 3, we classify all simple cuspidal modules for the subalgebra $\mathfrak q^\pm$ of the $N=2$ Ramond algebra. In Section 4, we classify all simple cuspidal modules of the $N=2$ Ramond algebra.  In Section 5, we show that all
simple non-cuspidal weight modules over the $N=2$ Ramond algebra which turns out to be highest (or lowest) weight modules, and then get the main result of this paper.

Throughout this paper, we shall use $\c, {\mathbb Q}$, $\z$, $\z^*$, $\z_+$ and $\mathbb N$ to denote the sets of the complex numbers, the rational numbers, the integers,  the non-zero integers, the positive integers and the nonnegative integers, respectively.
For convenience, all algebras (vector spaces) are based on the field $\c$,  all elements in superalgebras and modules are homogenous unless specified. We always denote by $U(L)$ the universal enveloping algebra of a given Lie (super)algebra $L$, and denote by $L^nV:=\span_\c\{x_1x_2\cdots x_n\cdot v\mid v\in V, x_i\in L, i=1,2,\cdots n\}$ for any positive integer $n$ and $L$-module $V$. A nonzero $v\in V$ is called a trivial $L$-vector if $Lv=0$.

\section{Preliminaries}
In this section, we collect the definitions about the  $N=2$ Ramond algebra and some  known facts for later use.

Let $L$ be a  Lie superalgebra. An $L$-module is a $\z_2$-graded vector space $V=V_{\bar0}\oplus V_{\bar 1}$ together with a bilinear map, $L\times V\to V$, denoted $(x,v)\mapsto xv$ such that
$$
x(yv)-(-1)^{|x||y|}y(xv)=[x,y]v
$$
for all $x,y\in L, v\in V$, and  $L_{\sigma} V_{\tau}\subseteq V_{\sigma+\tau}$ for all $\sigma, \tau\in\z_2$. Thus there is a parity change functor $\Pi$ on the category of $L$-modules, which interchanges
the $\z_2$-grading of a module.

The $N=2$ Ramond algebra ${\frak g}$ is a Lie superalgebra over $\c$ with a basis $\{L_m, H_m, G_m^\pm, C\mid m\in\z\}$ and the following relations:
\begin{eqnarray*}
&& [L_m,L_n]=(n-m)L_{n+m}+{1\over12}(n^3-n)C,\\
&&[L_m, H_n]=nH_{m+n},\\
&&[H_m,H_n]={1\over3}m\delta_{m+n,0}C,\\
&&\label{gd2}
[L_m,G_p^\pm]=(p-\frac{m}{2})G_{p+m}^\pm,\\
&&[H_m,G_p^\pm]=\pm G_{m+p}^\pm,\\
&&\label{gd3}[G_p^+,G_q^-]=-2L_{p+q}+(p-q)H_{p+q}+{1\over3}(p^2-{1\over4})\delta_{p+q,0}C,\\
&&[G_p^\pm,G_q^\pm]=0=[C,\frak g]
\end{eqnarray*}
for $m,n, p, q\in\z$.

The Lie superalgebra $\frak g$ is $\z$-graded by $(\frak g_i)_{\bar0}=\span_\c\{L_i, H_i\}+\delta_{i, 0}\c C$ and $(\frak g_i)_{\bar1}=\span_\c\{G_i^+, G_i^-\}$ for any $i\in\z$. Now we introduce some subalgebras of $\frak g$ as follows:
\begin{equation}\mathfrak t:=\frak g_{\bar0}=\span_\c\{L_n, H_n, C\mid n \in\z\},\label{deft}\end{equation}  which is called the twisted Heisenberg-Virasoro algebra (see \cite{ADKP, LJ, LvZ}, etc.).
\begin{equation}{\mathfrak q}^\pm:=\span_\c\{L_n, H_n, G_n^\pm, C\mid n\in\z\},\label{defq}\end{equation} which play a key role in our studies, and will be mainly considered in Section 3.

For any
${\frak g}$-module $V$ and $\lambda\in \c$, set
$V_{\lambda}:=\bigl\{v\in V\bigm|L_0v=\lambda v\bigr\}$, which is
generally called the weight space of $V$ corresponding to the weight
$\lambda$.
A ${\frak g}$-module $V$ is called a weight module if $V$ is the
sum of all its weight spaces.

For a weight module $V=V_{\bar0}+V_{\bar1}$, we define
\begin{equation}\hbox{Supp}(V):=\bigl\{\lambda\in \c \bigm|V_\lambda \neq
0\bigr\}.\end{equation}

If $V$ is a simple weight ${\frak g}$-module, then
there exists $\lambda\in\c$ such that ${\rm
Supp}(V)\subset\lambda+\z$. So $V=\sum_{i\in \z}V_{\lambda+i}$ is $\z$-graded (sometimes simply denoted by $V=\sum_{i\in \z}V_{i}$).
A weight ${\frak g}$-module $V=\sum V_{\lambda+i}$ is called {\it
quasi-finite} if all $V_{\lambda+i}$ are finite-dimensional.
If, in addition, there exists a positive integer $p$ such that \BE \dim (V_{\lambda+i})_{\tau}\le p,\ \forall i\in\z,\ \forall \tau\in\z_2,\label{length}\EE the module $V$ is called {\it cuspidal}.  In this case denoted by ${\rm b}(V)$ the minimal $p$ in (\ref{length}).
If ${\rm b}(V)=1$, the cuspidal module $V$ is called {\it a module of the intermediate
 series}.

A ${\frak g}$-module $V$ is called a {highest} (resp.
{lowest) weight module}, if there exists a nonzero $v
\in V_{\lambda}$ such that

1) $V$ is generated by $v$ as  ${\frak g}$-module with $L_0v=\lambda v$, $G_0^+v=0$, $H_0v=h'v$  and $Cv=cv$ for some $h,h',c\in\c$;

2) ${{\frak g}}_+v=0 $ (resp. ${{\frak g}}_- v=0 $), where ${{\frak g}}_+=\sum_{i>0}{{\frak g}}_i$, ${{\frak g}}_-=\sum_{i<0}{{\frak g}}_i$.
Clearly highest or lowest weight modules are quasi-finite.

Similarly, we can define a weight module,  a cuspidal module and   a module of the intermediate
 series for the Virasoro algebra $\Vir$ and the twisted Heisenberg-Virasoro algebra $\mathfrak{t}$.

For $a, b\in\c$, the $\Vir$-module of the intermediate series  ${\mathcal A}_{a,\; b}=\sum_{i\in\z}\c v_i$  was given  as follows (see \cite{KS}):
\BE  L_mv_i=(a+i+b m)v_{m+i}, \  \forall i, m\in\z.\label{inter-vir}\EE

It is well-known that ${\mathcal A}_{a,\; b}\cong{\mathcal A}_{a+1,\; b}$ for all $a, b\in\c$, then we can always suppose that $a\not\in\z$ or $a=0$ in ${\mathcal A}_{a,\; b}$.
Moreover, the module
${\mathcal A}_{a,\; b}$ is simple if $a\notin\z$ or $b\ne0, 1$.
In the opposite case,  ${\mathcal A}_{0,0}$ has $\c v_0$ as a submodule, and its corresponding
quotient is denoted by ${\mathcal A}_{0,0}'$. Dually,
${\mathcal A}_{0,1}$ has $\sum_{i\ne 0}\mathbb C v_i$ as a submodule, which is
isomorphic to ${\mathcal A}_{0,0}'$. For
convenience, we simply write ${\mathcal A}_{a,b}'={\mathcal A}_{a,b}$ when ${\mathcal A}_{a,b}$ is simple.

All simple weight $\Vir$-modules with finite-dimensional weight spaces  were  classified in \cite{Ma}.

\begin{theo}\label{vir}
Let $V$ be a simple weight $\Vir$-module with finite-dimensional weight spaces.
Then $V$ is a highest weight module, a lowest weight module, or a module of the intermediate series.
\end{theo}

 Extending this classification in a nontrivial way, L\"u and Zhao in \cite{LvZ} classified all simple weight $\mathfrak t$-modules with finite-dimensional weight spaces.

\begin{theo}\label{thm-twisted}
Let $V$ be a simple weight $\mathfrak t$-module with finite-dimensional weight spaces.
Then $V$ is a highest weight module, a lowest weight module, or a module of the intermediate series  ${\mathcal A}_{a,\; b,\; c}=\sum_{i\in\z}\c v_i$ for some $a, b, c\in\c$, with
\BE L_mv_i=(a+i+bm)v_{m+i},\quad H_mv_i=cv_{m+i},\quad \forall i, m\in\z.\label{inter-thv}\EE
Moreover, the module
${\mathcal A}_{a,\; b,\; c}$ is simple if $a\notin\z$ or $b\ne0, 1$ or $c\ne 0$.
For
convenience, we also use ${\mathcal A}_{a, b, c}'$ to denote the nontrivial simple sub-quotient of ${\mathcal A}_{a, b, c}$.
\end{theo}

\section{Cuspidal modules of the subalgebras ${\mathfrak q^\pm}$}

In this section, we classify all simple weight ${\mathfrak q^\pm}$-modules with finite-dimensional weight spaces.

\begin{lemm}\label{nilp}
Let $V=\sum V_i$ be a cuspidal $\mathfrak t$-module including no any trivial $\mathfrak t$-vector. There exists $m\in\z^+$ such that for any nonzero $v\in V_i$, $i\in\z^*$, any simple $\mathfrak t$-module $V'\subset U(\mathfrak t)v$ satisfies \BE V'\subset \sum_{1\le k\le m}\mathfrak t^kv.\label{maxq}\EE
\end{lemm}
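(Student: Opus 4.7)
The plan is to exploit Theorem 2.2, which classifies simple cuspidal $\mathfrak{t}$-modules as intermediate-series modules $\mathcal{A}'_{a,b,c}$ with one-dimensional weight spaces, together with an induction on the composition length $\ell$ of $V$. Since every composition factor of a cuspidal $\mathfrak{t}$-module without trivial vectors contributes exactly $1$ to the dimension of each weight space, the uniform bound $p$ on $\dim V_\mu$ forces $\ell\le p$. This finite-length observation is the first thing I would record.

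Before starting the induction, I would establish a one-step generation property for intermediate-series submodules: from the formulas $L_m v_j=(a+j+bm)v_{j+m}$ and $H_m v_j=c\,v_{j+m}$ of Remark 2.3, the subspace $\mathfrak{t}w$ spans every weight space of a simple intermediate-series submodule $W$ except possibly one exceptional one, which is recovered by $\mathfrak{t}^2 w$, so $W\subseteq \mathfrak{t}w+\mathfrak{t}^2 w$ for any nonzero weight vector $w\in W$. I would also note that the hypothesis $i\in\mathbb{Z}^*$, together with the standard normalization of the coset representative of weights, ensures $\mathrm{wt}(v)\ne 0$; hence $v=(1/\mathrm{wt}(v))L_0 v\in \mathfrak{t}v=F_1$, so the ascending chain $F_n:=\sum_{k=1}^{n}\mathfrak{t}^k v$ satisfies $\bigcup_n F_n=U(\mathfrak{t})v$.

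The induction is on $\ell$. The base case $\ell=1$ is immediate with $m=2$, by the one-step generation property applied to $V'=V$. For $\ell>1$, I would fix a minimal submodule $V_1\subseteq V$ (necessarily intermediate series) and apply the inductive hypothesis to $V/V_1$ to obtain a uniform bound $m'$. For a simple submodule $V'\subseteq U(\mathfrak{t})v$, two cases arise: either $V'\cap V_1=0$, in which case the projection to $V/V_1$ is injective on $V'$ and the inductive bound transports back via a careful lifting of a weight vector combined with the one-step generation property; or $V'=V_1$, in which case the problem reduces to bounding the depth of some nonzero weight vector in $V_1\cap U(\mathfrak{t})v$.

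The main obstacle is this last case: for some weight $\mu$ with $V_1\cap V_\mu\ne 0$, one must exhibit a nonzero vector of $V_1\cap V_\mu$ inside $F_n$ for $n$ bounded independently of $v$ and $\mu$, since the one-step generation property then yields $V_1\subseteq F_{n+1}$. My strategy is to combine the PBW filtration of $U(\mathfrak{t})$ with the observation that $V_\mu\cap U(\mathfrak{t})v$ has dimension at most $\ell$, so the ascending chain $F_n\cap V_\mu$ admits at most $\ell$ strict inclusions; a careful analysis using the Heisenberg-Virasoro bracket relations and the explicit intermediate-series action formulas rules out strict jumps at arbitrarily large depth, yielding a uniform $m$ depending only on $V$ and completing the induction.
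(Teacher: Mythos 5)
Your frame agrees with the paper's: both exploit the uniform weight-multiplicity bound to get a composition series of bounded length $p\le N$, both use the fact that a simple intermediate-series $\mathfrak t$-module is recovered from any nonzero weight vector by at most two applications of $\mathfrak t$ (this is the paper's closing step, via $L_{k-j}v_j^{(1)}=(a+b_1(k-j)+j)v_k^{(1)}$ and $H_{k-j}v_j^{(1)}=c_1v_k^{(1)}$), and both reduce the lemma to a descent: from a vector $v$ generating the top composition factor, produce a nonzero vector of the next submodule down at \emph{uniformly} bounded depth. But that descent step is exactly where your proposal has a genuine gap. Your mechanism --- counting strict inclusions in the chain $F_n\cap V_\mu$ --- cannot deliver uniformity in $v$. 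The chain indeed admits at most $\dim\bigl(V_\mu\cap U(\mathfrak t)v\bigr)$ strict inclusions, but nothing bounds \emph{where} they occur: since $F_{n+2}\cap V_\mu$ is fed by $\mathfrak t$ applied to \emph{all} weight components of $F_{n+1}$, equality $F_n\cap V_\mu=F_{n+1}\cap V_\mu$ at one step does not propagate to the next, so for varying $v$ the last jump could sit at arbitrarily large depth. Your sentence ``a careful analysis\dots rules out strict jumps at arbitrarily large depth'' is precisely the unproved content of the lemma, not a route to it. The paper fills this hole by explicit computation: writing $v\equiv a_pv_i^{(p)}$ modulo $V^{(p-1)}$ and using (\ref{equiv1})--(\ref{equiv2}), it exhibits concrete depth-$2$ elements of $V^{(p-1)}$, namely $X_{k'}Y_{k-k'}v-d_{k,k'}H_kv$ when $c_p\ne0$, $H_kv$ when $c_p=0$, and $L_{k'}L_{k-k'}v-d_{k,k'}L_kv$ in the pure-Virasoro case; the degenerate situations in which all of these vanish are shown to force $p=1$, because then $\span_\c\{H_kv\mid k\in\z\}$ or $\span_\c\{L_kv\mid k\in\z\}$ (suitably completed in Subcase 3.3, where some $L_{k_0}v=0$) is already a submodule. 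One extra application of $L_j$ or $H_j$ moves the new vector to a nonzero grading index, giving cost $3$ per composition step and the explicit uniform bound $m=3N+2$.

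Two secondary points, both repairable but worth flagging. First, your induction passes to $V/V_1$ for a minimal submodule $V_1$, but the lemma's hypothesis (no trivial $\mathfrak t$-vector) need not survive the quotient: $V/V_1$ can acquire trivial vectors even when $V$ has none, so the inductive hypothesis does not apply as stated; the paper avoids this by working with congruences modulo $V^{(p-1)}$ inside $U(\mathfrak t)v$ rather than with honest quotient modules. Second, your opening claim that every composition factor contributes exactly $1$ to \emph{each} weight space is false as stated (trivial factors contribute to only one weight, and ${\mathcal A}_{0,0}'$ misses the weight $0$); ``almost every weight space'' is what holds and is what the finite-length bound actually needs. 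Moreover your case split $V'\cap V_1=0$ versus $V'=V_1$ does not escape the main obstacle either: after lifting, the error term $w'-u$ lies in $V_1\cap U(\mathfrak t)v$, so the first case reduces to the same unbounded-depth problem as the second.
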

\begin{proof}
From representation theory of the Virasoro algebra, $\dim\,V_j=N$ for some positive integer $N$ holds for almost all $j\in\z$ and $C$ acts on $V$ as zero (see \cite{KS, MP}).
Consider a composition series of $\mathfrak t$-submodules of $U(\mathfrak t)v$:
\begin{equation}0\subset V'=V^{(1)}\subset V^{(2)}\subset \cdots \subset V^{(p)}=U(\mathfrak t)v,\label{fil1}\end{equation}
where $V^{(1)}, \cdots ,V^{(p)}$ are $\mathfrak t$-submodules of $V$, $1\le p\le N$, and
the quotient modules $W^{(j)}=V^{(j)}/V^{(j-1)}=\sum\c \bar{v}_i^{(j)}$ are isomorphic to ${\mathcal A}_{a, b_j, c_j}'$ for some $a, b_j, c_j\in\c$ by Theorem \ref{thm-twisted}.

Assume that $0\ne v=a_1v_i^{(1)}+\cdots+a_{p}v_i^{(p)}\in V_i^{(p)}:=V_i\cap V^{(p)}$ for some $a_1, \cdots a_{p}\in\c$ such that $\{v_i^{(j)}+V^{(j-1)}, i\in\mathbb Z\}$ is a basis of $V^{(j)}/V^{(j-1)}$ for $j=1,\cdots, p$, where $V^{(0)}=0$. Then we have
\begin{eqnarray} &&L_jv\equiv a_{p}(a+b_{p}j+i)v_{i+j}^{(p)}\ ({\rm mod}\, V_{i+j}^{(p-1)}),\label{equiv1}\\
&& H_jv\equiv c_pv_{i+j}^{(p)}\ ({\rm mod}\, V_{i+j}^{(p-1)})\label{equiv2}\end{eqnarray} for any $j\in\z$.

Now we shall prove that there exists $0\ne u\in V_{i+k}^{(p-1)}$ for some $k\in\z$ such that \begin{equation}u\in\sum_{i=1}^2\mathfrak t^iv\label{ut2}\end{equation} case by case.

\noindent{\bf Case 0.}  $V^{(p)}/V^{(p-1)}$ is a trivial $\frak t$-module.  In this case $\frak tv\in V^{(p-1)}$. If $\frak tv=0$, then $v$ is a trivial $\frak t$-vector. It gets a contradiction. So $\frak tv\ne0$, and then \eqref{ut2} holds.

\noindent{\bf Case 1.} $c_p\ne 0$.  In this case, for any $k, k'\in\z$, $X, Y\in\{L, H\}$, there exists $d_{k, k'}\in\c$ such that $u=X_{k'}Y_{k-k'}v-d_{k, k'}H_kv\in V_{i+k}^{(p-1)}$ if $X_{k'}Y_{k-k'}v\ne 0$ by (\ref{equiv1}) and (\ref{equiv2}). If $u$ is always zero, then $$X_{k'}Y_{k-k'}v=d_{k, k'}H_kv, \ \forall X, Y\in\{L, H\},\ k, k'\in\z$$ for some $d_{k, k'}\in\c$. Then $\span_\c\{H_kv\mid k\in\z\}$ becomes a $\mathfrak t$-submodule and $p=1$.
So (\ref{ut2}) holds.

\noindent{\bf Case 2.} $c_p=0$. In this case, $H_jv\in V_{i+j}^{(p-1)}$ for any $j\in\z$. So there exists $u=H_kv\in V_{i+k}^{(p-1)}$ such that $0\ne u\in\mathfrak tv$  by (\ref{equiv2}) if $H_kv\ne 0$ for some $k\in\z$. In this case (\ref{ut2}) also holds.

\noindent{\bf Case 3.} Now we can suppose that $H_jv=0$ for any $j\in\z$, then $U(\mathfrak t)v=U(\Vir)v$.

\noindent{\bf Subcase 3.1.} $L_jv\not\in V_{i+j}^{(p-1)}, \forall j\in\z$.   For any $k'\in\z$, there exists $d\in \c$ such that $u=L_{k'}L_{k-k'}v-dL_kv\in V_{i+k}^{(p-1)}$ if $L_{k'}L_{k-k'}v\ne 0$ by (\ref{equiv1}). Similarly, if the $u$ is always zero, then $\span_\c\{L_kv\mid k\in\z\}$ becomes a $\Vir$-submodule and then $p=1$. So (\ref{ut2}) still holds.

\noindent{\bf Subcase 3.2.} $u=L_kv\in V_{i+k}^{(p-1)}$ for some $k\in\z$.  In this case (\ref{ut2}) holds if $L_kv\ne 0$.

\noindent{\bf Subcase 3.3.} The only left case is that $L_{k_0}v=0$ for some $k_0\in\z$. In this case $L_jv\not\in V_{i+j}^{(p-1)}$ for any $j\ne k_0$ by (\ref{equiv1}).
As in Subcase 3.1, for any $k\ne k_0, k'\in\z$, there exists $d_{k, k'}\in \c$ such that $u=L_{k'}L_{k-k'}v-d_{k, k'}L_kv\in V_{i+k}^{(p-1)}$ if $L_{k'}L_{k-k'}v\ne 0$.

Suppose that the above $u$ is always zero, then $$L_{k'}L_{k-k'}v=d_{k, k'}L_kv$$ for some $d_{k, k'}\in\c$ if $k\ne k_0$. Moreover, for any $k\ne k_0, k\ne 0, j\ne0$, $(k-2j)L_kL_{k_0-k}v=[L_j, L_{k-j}]L_{k_0-k}v=L_jL_{k-j}L_{k_0-k}v-L_{k-j}L_jL_{k_0-k}v=d_{k-j, k_0-k}L_jL_{k_0-j}v-d_{k-j, j}L_kL_{k_0-k}v$.  So we have $$L_jL_{k_0-j}v=\frac{(k-2j)+d_{k-j, j}}{d_{k-j, k_0-k}}L_kL_{k_0-k}v,$$ where $d_{k-j, k_0-k}\ne 0$ since $L_jv\not\in V_{i+j}^{(p-1)}$ for any $j\ne k_0$.
In this case $\sum_{j\in\z, j\ne k_0}\c L_jv+\c u_{k_0}$ becomes a Vir-submodule, where $u_{k_0}=L_kL_{k_0-k}v\ne 0$ for some $k\in\z^*$. Then $p=1$.

Now (\ref{ut2}) holds in all cases. If $k+i=0$, we can replace $u$ by $L_ju$ or $H_ju$ for some $j\ne 0$ and get that there exists $0\ne u\in V_{i+k}^{(p-1)}$ for some $k\in\z$ with $k+i\ne 0$ such that \begin{equation}u\in\sum_{i=1}^3\mathfrak t^iv.\label{ut3}\end{equation}
By induction we can find $v_j^{(1)}\in \sum_{1\le k\le 3p}\mathfrak t^kv$ for some $j\in\z$ and then \begin{equation} V'\subset \sum_{1\le k\le 3p+2}\mathfrak t^kv\label{maxq}\end{equation} since $L_{k-j}v_j^{(1)}=(a+b_1(k-j)+j)v_k^{(1)}$ and $H_{k-j}v_j^{(1)}=c_1v_k^{(1)}$ for any $k\in\z$. Set $m=3N+2\ge 3p+2$, the lemma follows.
\end{proof}

 Set ${\mathcal O}_n:=\{0, \pm1, \pm2, \cdots, \pm n\}$ for $n\in\z_+$. We denote $\mathfrak q:=\mathfrak q^+ (\cong \mathfrak q^-)$, $G_j:=G_j^+$ for all $j\in\z$ and $\mathfrak q_{\bar1}=\span_\c\{G_j\mid j\in\z\}$.

\begin{lemm}\label{induction} Let $V$ be a $\mathfrak q$-module.
Suppose that there exist $n , m\in\z_+$ and $n>m$ such that \BE G_{r_1}G_{r_2}\cdots G_{r_m}V=0\label{nilp-0} \EE for all $ r_1, r_2, \cdots r_m\in {\mathcal O}_n$, then $(\ref{nilp-0})$ holds for all  $r_1, r_2, \cdots r_m\in \z$.
\end{lemm}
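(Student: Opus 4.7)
The proof rests on the following derivation identity. By the Leibniz rule for the commutator in the associative algebra $U(\mathfrak q)$, the bracket $[L_k, Q_j] = (j - k/2) Q_{k+j}$ yields
\[
[L_k,\, Q_{s_1}Q_{s_2}\cdots Q_{s_m}]
\;=\; \sum_{i=1}^{m} \bigl(s_i - \tfrac{k}{2}\bigr)\, Q_{s_1}\cdots Q_{s_i+k}\cdots Q_{s_m}
\]
for every $k\in\z$. Whenever $Q_{s_1}\cdots Q_{s_m}V = 0$, both $L_k\cdot(Q_{s_1}\cdots Q_{s_m}V)$ and $(Q_{s_1}\cdots Q_{s_m})\cdot(L_kV)$ vanish (using $L_kV\subseteq V$), so the right-hand side of the displayed identity also annihilates $V$. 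Consequently the set $I\subseteq\z^m$ of tuples $(r_1,\ldots,r_m)$ for which $Q_{r_1}\cdots Q_{r_m}V=0$ is stable under the Virasoro derivation action on $\Lambda^m\mathfrak q_{\bar 1}$, and by hypothesis $I\supseteq\mathcal O_n^m$.

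The plan is then to propagate the containment $\mathcal O_n^m\subseteq I$ to all of $\z^m$ by induction. I would first induct on $R := \max_i r_i$ to show that $I$ contains every tuple with $\min_i r_i \ge -n$. The base case $R\le n$ is the hypothesis. For the inductive step, given $r_m = R > n$, choose $s\in[-n, R-1]$ with $s\notin\{r_1,\ldots,r_{m-1}\}$, $s > r_{m-1}$, and $3s\ne R$, and apply the derivation identity to the source $(r_1,\ldots, r_{m-1}, s)$ with $k = R-s$. The source lies in $I$ by the induction hypothesis (its maximum is at most $R-1$ and its minimum is at least $-n$), the target appears with the nonzero coefficient $(3s-R)/2$, and each other term involves a new index $r_i+k = r_i+R-s < R$ (because $r_i < s$), so the corresponding tuples again lie in $I$ by induction. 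A symmetric induction on $-\min_i r_i$, using $L_{-1}$ in place of $L_1$, then extends $I$ to all of $\z^m$.

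The main technical obstacle is the \emph{adjacency sub-case} $r_{m-1}=R-1$, in which no admissible $s$ satisfies both $s > r_{m-1}$ and $s \le R-1$. The fix is to enlarge the source to $(r_1,\ldots, r_{m-2}, R-2, R-1)$ and apply $L_2$; the derivation identity then produces the target with nonzero coefficient $-(R-3)$ but also an ``overshoot'' tuple $(r_1,\ldots, r_{m-2}, R-2, R+1)$ of maximum $R+1$. Using the finer measure $R + d$ — where $d$ is the length of the maximal run of consecutive top indices ending at $R$ — one sees that the adjacent target at height $R$ (with $d=2$) and the non-adjacent overshoot at height $R+1$ (with $d=1$) lie at the same level of the refined induction, and they can be resolved together by combining a few instances of the derivation identity into a small linear system; longer runs of consecutive top indices are handled in the same way by passing to $L_3, L_4,\ldots$. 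The hypothesis $n > m$, which gives $|\mathcal O_n| = 2n+1 \ge 2m+3$, is precisely what supplies the room needed in $\mathcal O_n$ to make all of these auxiliary index choices without spurious collisions.
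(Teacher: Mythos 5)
Your core mechanism is exactly the paper's: act by the adjoint of low-degree even elements on known-vanishing $Q$-monomials, induct outward from $\mathcal O_n$, and resolve the adjacent-run cases (where the naive source tuple would repeat an index) by solving a small linear system in the target monomial together with an ``overshoot'' monomial of larger maximum. The paper organizes the induction as a symmetric box growth $\mathcal O_n\to\mathcal O_{n+1}$ rather than your two-phase $\max$-then-$\min$ induction with the refined measure $R+d$, but that restructuring is inessential, and your non-adjacent step (choice of $s$ with $3s\ne R$, all side terms of strictly smaller maximum) is carried out more carefully than the paper's corresponding step.

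The genuine gap is in how you propose to resolve the linear systems, which is the whole content of the lemma. The paper's second equation comes not from a higher Virasoro mode but from $H_1$: since $[H_1,Q_p]=Q_{p+1}$ with coefficient $1$ independent of $p$, pairing $L_1$ with $H_1$ always produces a row of ones, so the target/overshoot $2\times2$ system has determinant $(n-\tfrac32)-(n+\tfrac12)=-2$ (and $-3$, $-4$, \dots\ for longer runs, obtained by iterating with $r_{m-2}=n-2,\dots,r_1=n-m+1$), nonzero uniformly; moreover the overshoot is simply eliminated and never has to be proved zero on its own, which removes most of the level bookkeeping your measure $R+d$ is designed to manage. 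Your $L$-only scheme leaves solvability unproven. For a run of length $2$ it happens to work: your $L_1$/$L_2$ pair gives determinant $2R^2-8R+\tfrac{13}{2}$, which has no integer roots, and your coefficient $-(R-3)$ is nonzero only because adjacency forces $m\ge2$, hence $n\ge3$ and $R\ge4$ --- a constraint you never note. But for runs of length $3,4,\dots$ you only assert that ``passing to $L_3,L_4,\dots$'' yields a solvable system, writing down neither the system nor its determinant; there is no a priori reason these $L$-only determinants are nonzero for all integer $R$, so each would need verification. As written this is a real gap; the fix --- and the paper's actual device --- is to bring in $H_1$, which your proposal never uses even though $H_m\in\mathfrak q$.
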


\begin{proof}
We shall use the induction on $n$ and first prove that $$G_{r_1}\cdots G_{r_{m-1}}G_{n+1}V=0$$ for all  $r_1, r_2,\cdots, r_{m-1}\in {\mathcal O}_n$.

By action of $L_{n+1-r_m}$ on (\ref{nilp-0}) (where $r_1<r_2<\cdots<r_m$) we get
\BE G_{r_1}G_{r_2}\cdots G_{r_{m-1}}G_{n+1}V=0\label {nilp-1}\EE for all $-n\le r_1<r_2<\cdots <r_{m-1}\le n-1$.
The left is to prove that $$G_{r_1} G_{r_2}\cdots G_{r_{m-2}} G_n G_{n+1} V = 0$$ for all  $r_1, r_2, \cdots r_{m-2}\in {\mathcal O}_{n}$.

By actions of $L_1$ and $H_1$ on (\ref{nilp-1}) with $r_{m-1}=n-1$, we get
\begin{eqnarray} && ((n-\frac32)G_{r_1}\cdots G_{r_{m-2}}G_{n}G_{n+1}+(n+\frac12)G_{r_1}\cdots G_{r_{m-2}}G_{n-1}G_{n+2})V=0,\label {nilp-3}\\
&&(G_{r_1}\cdots G_{r_{m-2}}G_{n}G_{n+1}+G_{r_1}\cdots G_{r_{m-2}}G_{n-1}G_{n+2})V=0.\label {nilp-4}
\end{eqnarray}

Combining with (\ref{nilp-3}) and (\ref{nilp-4}), we get
\BE G_{r_1}\cdots G_{r_{m-2}}G_{n}G_{n+1}V=0 \label {nilp-5} \EE
for all $-n\le r_1<\cdots <r_{m-2}\le n-2$.

In the same way, by the actions of $L_1$ and $H_1$ on (\ref{nilp-5}) with $r_{m-2}=n-2$, we get
\begin{eqnarray} && G_{r_1}\cdots G_{r_{m-3}}G_{n-1}G_{n}G_{n+1}V=0 \label {nilp-10}
\end{eqnarray}
for all $-n\le r_1<r_2<\cdots <r_{m-3}\le n-3$.

Repeating the above steps by setting $r_{m-3}=n-3, \cdots, r_1=n-m+1$, respectively, we can get $G_{r_1}G_{r_2}\cdots G_{r_{m-1}}G_{n+1}V=0$ for all  $r_1, r_2, \cdots r_{m-1}\in \mathcal O_{n}$.

Similarly we can get $G_{-n-1}G_{r_2}G_{r_3}\cdots G_{r_m}V=0$ for all  $r_2, r_3, \cdots r_m\in \mathcal O_{n}$.
Then (\ref{nilp-0}) holds for all $r_1, r_2, \cdots r_m\in {\mathcal O}_{n+1}$. The lemma follows by induction on $n$.
\end{proof}

\begin{prop}\label{pssn0}
Let $V=\sum V_i$ be a simple cuspidal
${\mathfrak q}$-module. Then $V$ is isomorphic to a ${\mathfrak q}$-module of the intermediate series: $V=\sum\mathbb C v_i\cong {\mathcal A}_{a, b, c}'$ for some $a, b, c\in\c$ with
$$L_mv_i=(a+i+bm)v_{m+i},\quad H_jv_i=cv_{i+j},\quad G_jV=0$$
for all $m,i, j\in\z$.
\end{prop}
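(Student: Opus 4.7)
The plan is to show $Q_jV=0$ for all $j\in\z$, after which $V$ reduces to a simple cuspidal $\mathfrak t$-module: once $Q_jV=0$, the even part $V_{\bar0}$ becomes a $\mathfrak q$-submodule (closed under $L_m,H_m$ trivially and under $Q_j$ vacuously), so $\mathfrak q$-simplicity forces $V$ to be concentrated in a single parity, and Theorem \ref{thm-twisted} then identifies $V\cong\mathcal A_{a,b,c}'$ with the stated formula $H_jv_i=cv_{i+j}$.

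To prove $Q_jV=0$, I would introduce $V^{\mathfrak q_{\bar1}}:=\{v\in V\mid Q_jv=0\ \forall\,j\in\z\}$ and check it is a $\mathfrak q$-submodule: for $v\in V^{\mathfrak q_{\bar1}}$, the identities $Q_jL_mv=L_mQ_jv-(j-m/2)Q_{j+m}v$ and $Q_jH_mv=H_mQ_jv-Q_{j+m}v$ both vanish, while super-anticommutativity $Q_sQ_j+Q_jQ_s=0$ gives $Q_j(Q_sv)=-Q_s(Q_jv)=0$. By simplicity of $V$, $V^{\mathfrak q_{\bar1}}$ equals $0$ or $V$, so it suffices to rule out the first case. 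More generally, each $W_k:=\{v\in V\mid Q_{r_1}\cdots Q_{r_k}v=0\ \forall\,r_i\in\z\}$ is a $\mathfrak q$-submodule (bracket commutators with $L_m,H_m$ preserve the length of $Q$-products, and $Q_s$ anticommutes past $k$ factors with sign $(-1)^k$); taking the minimal $k$ with $W_k=V$, either $k=1$ and $V^{\mathfrak q_{\bar1}}=V$ directly, or $k\ge 2$ and $W_{k-1}$ is a proper submodule, hence zero by simplicity, so any nonzero $(k-1)$-fold product $Q_{r_1}\cdots Q_{r_{k-1}}v$ supplies a nonzero element of $W_1=V^{\mathfrak q_{\bar1}}$. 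Either way, it suffices to prove the nilpotency statement $\mathfrak q_{\bar1}^kV=0$ for some $k$, and by Lemma \ref{induction} it is in turn enough to find $m\in\z_+$ and $n>m$ with $Q_{r_1}\cdots Q_{r_m}V=0$ for all $r_i\in\mathcal O_n$.

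Producing such $m$ and $n$ is the core difficulty. I would pick a simple $\mathfrak t$-submodule $V'\subseteq V$, an intermediate series $\mathcal A_{a,b_0,c_0}'$ by Theorem \ref{thm-twisted}, and use $\mathfrak q$-simplicity together with PBW and the super-abelian structure of $\mathfrak q_{\bar1}$ to write $V=U(\mathfrak q)V'=\Lambda(\mathfrak q_{\bar1})V'$. The filtration $V^{(k)}:=\sum_{j\le k}\mathfrak q_{\bar1}^jV'$ consists of $\mathfrak t$-submodules whose successive quotients are cuspidal with intermediate-series composition factors—each contributing at least one dimension to almost every weight space—so the cuspidality bound $\dim V_i\le 2p$ forces termination at some $k_0$ with $V^{(k_0)}=V$. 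Lemma \ref{nilp} applied to the $\mathfrak t$-submodules appearing in these layers should then translate this stabilization into the desired uniform vanishing on $\mathcal O_n$. The main obstacle is exactly this translation: filtration-stabilization by itself only expresses long $Q$-products as combinations of shorter ones modulo lower layers, whereas genuine nilpotency requires them to vanish outright; closing this gap forces one to carefully balance the Grassmann anticommutation $Q_iQ_j=-Q_jQ_i$, the cuspidality bound, and Lemma \ref{nilp}'s uniform reach so as to satisfy both $n>m$ and $m$-fold vanishing on $\mathcal O_n$ simultaneously.
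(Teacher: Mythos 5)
Your outer scaffolding is sound and agrees with the paper's: the subspaces $V^{\mathfrak q_{\bar1}}$ and $W_k$ are indeed graded $\mathfrak q$-submodules, simplicity correctly reduces the Proposition to a nilpotency statement $\mathfrak q_{\bar1}^mV=0$, and Lemma \ref{induction} correctly reduces that to finding $m<n$ with $Q_{r_1}\cdots Q_{r_m}V=0$ for all $r_i\in\mathcal O_n$. But exactly at that point your argument stops short, and you say so yourself: the filtration $V^{(k)}=\sum_{j\le k}\mathfrak q_{\bar1}^jV'$ stabilizes by cuspidality, but stabilization only expresses long $Q$-products in terms of shorter ones modulo lower layers, which is strictly weaker than vanishing; no amount of reshuffling Grassmann signs turns ``$V^{(k_0)}=V$'' into a relation $Q_{r_1}\cdots Q_{r_m}V=0$. (Also, your assertion that each successive quotient contributes a dimension to almost every weight space is not justified -- layers may vanish -- though this is minor next to the main gap.) So this is a genuine missing idea, not a presentational shortfall.

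The ingredient you are missing is the paper's Claim, which anchors everything: for each $n$ there is a nonzero weight vector $v\in V_i$ with $i\in\z^*$ such that $Q_rv=0$ for all $r\in\mathcal O_n$. This is immediate from the Grassmann structure you invoke elsewhere: if $Q_rv\ne0$ replace $v$ by $Q_rv$; since $[G^+_p,G^+_q]=0$ gives $Q_r^2=0$ and there are only $2n+1$ indices in $\mathcal O_n$, the process terminates. The paper then takes the simple $\mathfrak t$-module $V'$ inside $U(\mathfrak t)v$ \emph{for this particular $v$} -- not an arbitrary simple $\mathfrak t$-submodule as in your sketch -- so that Lemma \ref{nilp} yields $V'\subset\sum_{1\le k\le m}\mathfrak t^kv$ with $m=3N+2$ independent of $n$; one then chooses $n\gg N$ so that $n>m$. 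Now normal-ordering gives $Q_{r_1}\cdots Q_{r_m}V'\subseteq\sum_{j}x_jQ_{r_j}v=0$ with $x_j\in U(\mathfrak q)$, because commutators $[L_k,Q_r]$, $[H_k,Q_r]$ only merge factors into new $Q$'s, leaving an original $Q_{r_j}$ to anticommute onto $v$ -- this is outright vanishing, exactly what your filtration cannot produce. Since $V=U(\mathfrak q_{\bar1})V'$ and $\mathfrak q_{\bar1}$ is super-abelian, $Q_{r_1}\cdots Q_{r_m}V=0$ on $\mathcal O_n$, and Lemma \ref{induction} finishes. Note the order of quantifiers: the uniformity of $m$ in Lemma \ref{nilp} is what permits choosing $n$ \emph{after} $m$; in your sketch $V'$ is unanchored to any $Q$-annihilated vector, so Lemma \ref{nilp} has nothing to bite on. Finally, you never verify Lemma \ref{nilp}'s hypothesis that $V$ contains no trivial $\mathfrak t$-vector; the paper disposes of that case separately, using the $H_0$-eigenspace decomposition of $U(\mathfrak q_{\bar1})v$ to show $\mathfrak q_{\bar1}v=0$ directly, and your write-up needs the same dichotomy.
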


\begin{proof}
Clearly there exists  some positive integer $N$ such that  $\dim\,V_i\le N$ holds for almost $i\in\z$ and $C$ acts on $V$ as zero (see \cite{KS, MP}). Now ${\mathfrak q}_{\bar1}^iV$ is ${\mathfrak q}$-submodule since ${\mathfrak q}_{\bar1}^{i+1}V\subset {\mathfrak q}_{\bar1}^iV$ for all $i\in\n$. So ${\mathfrak q}_{\bar1}V=V$ or ${\mathfrak q}_{\bar1}V=0$.

If there exists a trivial $\mathfrak t$-vector $v\in V$, then $V=U({\mathfrak q}_{\bar1})v$. Clearly $V=\c v+\sum_{i\ge 1}{\mathfrak q}_{\bar1}^iv$. Moreover we have ${\mathfrak q}_{\bar1}^iv=\{v\in V\mid H_0v=iv\}$ for any $i\in\z_+$. It is easy to see that ${\mathfrak q}_{\bar1}v=0$. Otherwise $\sum_{i\ge 1}{\mathfrak q}_{\bar1}^iv$ is a proper $\mathfrak q$-submodule of $V$.
So we can suppose that $V$ does not contain any trivial $\mathfrak t$-vectors.

\noindent {\bf Claim } For any $n\in\z_+$, there exists  $0\not=v\in V_i$ for some $i\in \z^*$ such that $G_rv=0$ for all $r\in {\mathcal O}_n$.

In fact, if $G_rv\not=0$ for some $r\in {\mathcal O}_n$, we may replace $v$ by $G_rv\in V_{r+i}$. So this claim holds for any $n\in\z_+$.

Choose $n\gg N$ for the $v, n$ in the Claim. Clearly $\mathfrak tv\ne 0$ (otherwise $\c v$ is a trivial $\mathfrak t$-module).
By Theorem \ref{thm-twisted}, we can choose a simple $\mathfrak t$-module $V'=\sum \c v_i\cong {\mathcal A}_{a, b, c}'\subset U(\mathfrak t)v$ for some $a, b, c\in\c$, and then $V=U({\mathfrak q}_{\bar1})V'$. By Lemma \ref{nilp}, there exists $m\in\z^+$ with $m<n$ such that \BE V'\subset \sum_{k\le m}\mathfrak t^kv.\label{maxq}\EE
Now for all $ r_1, r_2, \cdots r_m\in {\mathcal O}_n$, we have
\begin{eqnarray*} G_{r_1}G_{r_2}\cdots G_{r_m}V'\subseteq\bigoplus_{j=1}^m x_j G_{r_j}v=0,\end{eqnarray*}
where the $x_j$'s are some elements in $U({\mathfrak q})$.
It follows from $V=U({\mathfrak q}_{\bar1})V'$ that
$$
G_{r_1}G_{r_2}\cdots G_{r_m}V=0,\quad \forall r_1, r_2, \cdots r_m\in {\mathcal O}_n.
$$
By Lemma \ref{induction}, we get
\begin{equation}{\mathfrak q}_{\bar1}^mV=0. \label{grg01}\end{equation}
If ${\mathfrak q}_{\bar1}V=V$ then ${\mathfrak q}_{\bar1}^mV=V=0$, which is a contradiction.
So ${\mathfrak q}_{\bar1}V=0$ and the proposition follows from Theorem \ref{thm-twisted}.
\end{proof}

\begin{rema} Recently, Proposition \ref{pssn0} was proved in \cite{CLW}  by the different methods.
\end{rema}

\section{Classification of simple cuspidal
$\mathfrak g$-modules}

In this section we shall classify all simple cuspidal
modules for the $N=2$ Ramond algebra ${\frak g}$.

\begin{prop}\label{pn=2}
Let $V$ be a simple cuspidal
${\frak g}$-module. Then ${\rm b}(V)\le 2$.
\end{prop}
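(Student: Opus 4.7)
The plan is to transfer Proposition \ref{pssn0} from $\mathfrak{q}^\pm$ up to $\mathfrak{g}$ by exploiting the simplicity of $V$ as a $\mathfrak{g}$-module, together with the sextic identity $G^-_{r_1}G^-_{r_2}G^-_{r_3}G^+_{s_1}G^+_{s_2}G^+_{s_3}U^-=0$ advertised in the introduction, and then reading off a bound on the composition length of $V$ over $\mathfrak{q}^+$ (or $\mathfrak{q}^-$) in each $\z_2$-parity.

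First I would set up the two simple cuspidal building blocks. Since $V$ is cuspidal over $\mathfrak{g}$ it is cuspidal over each of $\mathfrak{q}^+$ and $\mathfrak{q}^-$, and (because all simple $\mathfrak{q}^\pm$-subquotients are of intermediate series by Proposition \ref{pssn0}, hence have one-dimensional weight spaces) it has finite composition length over each. Fix a simple $\mathfrak{q}^+$-submodule $U^+\subseteq V$ and a simple $\mathfrak{q}^-$-submodule $U^-\subseteq V$. Proposition \ref{pssn0} then tells us that each $U^\pm$ is of intermediate series, with $G^\pm_j U^\pm=0$ for all $j\in\z$, and that $\dim U^\pm_i\le 1$ with $U^\pm$ concentrated in a single parity.

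Next I would verify the sextic identity
$$G^-_{r_1}G^-_{r_2}G^-_{r_3}G^+_{s_1}G^+_{s_2}G^+_{s_3}u=0\qquad(u\in U^-,\ r_i,s_i\in\z)$$
by moving the three $G^-$'s to the right through the three $G^+$'s using
$$[G^+_p,G^-_q]=-2L_{p+q}+(p-q)H_{p+q}+\tfrac{1}{3}(p^2-\tfrac{1}{4})\delta_{p+q,0}C,\qquad G^\pm_pG^\pm_q=-G^\pm_qG^\pm_p,$$
and then applying $G^-_j u=0$. Every commutation lowers the number of $G^-$'s on the left and produces an extra element of $U(\mathfrak{t})$; the intermediate-series structure of $U^-$ over $\mathfrak{t}$ is what should make the combinatorial cancellation explicit.

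Finally I would use PBW for $\mathfrak{g}$ together with $G^-_jU^-=0$ to write every element of $V=U(\mathfrak{g})U^-$ as a finite sum of expressions $x\cdot G^+_{s_1}\cdots G^+_{s_b}u$ with $x\in U(\mathfrak{t})$, $u\in U^-$, $s_1<\cdots<s_b$. The sextic identity then forces the chain
$$U^-\subseteq\mathfrak{q}^+U^-\subseteq(\mathfrak{q}^+)^2U^-\subseteq\cdots$$
to stabilize, and the dual chain $U^+\subseteq\mathfrak{q}^-U^+\subseteq(\mathfrak{q}^-)^2U^+\subseteq\cdots$ (the second row of the diagram in the introduction) to stabilize symmetrically. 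A careful matching between the two chains, as indicated by the vertical inclusions in that diagram, shows that the $\mathfrak{q}^+$-composition series of $V$ has at most two factors in each parity. Since every composition factor is a simple cuspidal $\mathfrak{q}^+$-module, hence by Proposition \ref{pssn0} of intermediate series with one-dimensional weight spaces concentrated in a single parity, the required bound $\dim(V_i)_\tau\le 2$ drops out.

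The main obstacle I expect is the direct verification of the sextic identity: it is a numerically delicate super-commutator computation on an intermediate-series $\mathfrak{t}$-module, and making the cancellation manifest is the technical heart of the proof. A secondary subtlety is ensuring that the successive quotients in the two chains above are genuine simple cuspidal $\mathfrak{q}^\pm$-modules (so that Proposition \ref{pssn0} applies to them directly) and not merely arbitrary cuspidal subquotients; this is where the careful bookkeeping through the diagram from the introduction is required.
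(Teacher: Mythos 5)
Your skeleton matches the paper's proof of Proposition \ref{pn=2}: both take the two simple $\mathfrak q^\pm$-pieces $U^\pm$ with $G^\pm U^\pm=0$ from Proposition \ref{pssn0}, both hinge on the sextic identity (Lemma \ref{lemmn=3}), and both conclude through the two-row diagram. But there is a genuine gap at the decisive step, compounded by a misattribution of what the sextic identity does. The identity $G^-_{r_1}G^-_{r_2}G^-_{r_3}G^+_{s_1}G^+_{s_2}G^+_{s_3}U^-=0$ only constrains vectors of the form $(G^-)^3(G^+)^3U^-$; it says nothing about $(G^+)^bU^-$ for large $b$, so it cannot ``force the chain $U^-\subseteq\mathfrak q^+U^-\subseteq(\mathfrak q^+)^2U^-\subseteq\cdots$ to stabilize'' --- the identity is compatible with a module in which that chain grows forever. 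Stabilization comes from cuspidality alone: for instance, $(G^+)^bU^-$ sits in the $H_0$-eigenvalue $c+b$ part of $V$ (because $[H_0,G^+_r]=G^+_r$), and a fixed finite-dimensional weight space $V_\lambda$ can meet only finitely many $H_0$-eigenvalues; the paper achieves the same thing by locating $u_i^-\in(G^-)^nU^+$ inside $V=\sum_{j\ge0}(G^-)^jU^+$ and translating by $L_m$'s and $H_m$'s to get $U^-\subseteq(G^-)^nU^+$.

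What the sextic identity actually buys is the bound $n\le 2$, and this is exactly the ``careful matching'' you defer without supplying. One must first establish the ladder of equalities $(G^-)^iU^+=(G^+)^{n-i}U^-$ for $0\le i\le n$; the nontrivial input here is $(G^+)^nU^-=U^+$, which the paper deduces from the irreducibility of $U^+$ as a $\mathfrak t$-module together with a symmetry argument showing $(G^+)^nU^-\ne0$. With the ladder in hand, if $n\ge3$ then $(G^-)^{n-3}U^+=(G^+)^3U^-$, whence
\begin{equation*}
U^-=(G^-)^nU^+=(G^-)^3\bigl((G^-)^{n-3}U^+\bigr)=(G^-)^3(G^+)^3U^-=0
\end{equation*}
by Lemma \ref{lemmn=3}, a contradiction; so $n\le2$, $V=U^++G^-U^++U^-$, with $U^+$ and $U^-$ in one parity and the middle layer in the other, which is where $\dim(V_i)_\tau\le2$ comes from. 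Without the ladder and this short deduction, your claim that ``the $\mathfrak q^+$-composition series of $V$ has at most two factors in each parity'' is an assertion of the conclusion rather than a proof of it. A smaller omission: the middle layer $G^-U^+=G^+U^-$ is not covered by Proposition \ref{pssn0} (it is a cuspidal $\mathfrak t$-module, not a simple $\mathfrak q^\pm$-module), so the bound $\dim(G^-U^+)_i\le2$ needs its own argument --- e.g.\ the observation that $G^-_r$ maps $(G^+U^-)_\lambda$ into the line $\c u^-_{\lambda+r}$ with coefficients depending linearly on only two functionals of the expansion coefficients, plus simplicity of $V$ to rule out a vector killed by all $G^-_r$ (this is carried out in Proposition \ref{n=2structure}).
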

\begin{proof}
Clearly $C$ acts on $V$ as zero.

Now the subalgebras ${\mathfrak q}^\pm=\span_\c\{L_m, H_m, G_m^\pm, C\mid m\in\z\}$ are all isomorphic to the Lie superalgebra $\mathfrak q$ in Section 3. The even part $\frak g_{\bar0}$ is just the twisted Heisenberg-Virasoro algebra $\mathfrak t$ in Section 2.2.

If there exists a trivial $\mathfrak q^+$-vector $v\in V$, then $V=U({\mathfrak q}_{\bar1}^-)v$. Clearly $V=\c v+\sum_{i\ge 1}({\mathfrak q}_{\bar1}^-)^iv$ since $({\mathfrak q}_{\bar1}^-)^iv=\{v\in V\mid H_0v=iv\}$ for any $i\in\z_+$. It is easy to see that ${\mathfrak q}_{\bar1}^-v=0$. Otherwise $\sum_{i\ge 1}({\mathfrak q}_{\bar1}^-)^iv$ is a proper $\mathfrak g$-submodule of $V$.
So we can suppose that $V$ does not contain any trivial $\mathfrak q^\pm$-vector.

By Proposition \ref{pssn0}, we can choose a simple ${\mathfrak q}^+$-module $U^+=\sum\c u_i^+$ of $V$ such that $G_r^+U^+=0$ for all $r\in\z$. In this case $V=U(G^-)U^+$, where $G^-=\span_\c\{G_r^-\mid r\in\z\}$ is a subalgebra of $\frak g$.

Now we can suppose that $G^-U^+\ne 0$ (otherwise $V$ is a trivial $\frak g$-module). Set $(G^-)^0U^+=U^+$ and $(G^-)^{i+1}U^+=G^-(G^-)^{i}U^+$ for all $i\ge 0$.
Then we have $G^+(G^-)^{i+1}U^+\subset (G^-)^{i}U^+$. Moreover
\BE V=\sum_{i\ge 0}(G^-)^{i}U^+.\label{v000}  \EE

By Proposition \ref{pssn0} we can choose a simple ${\mathfrak q}^-$-module $U^-$ of $V$ such that $U^-=\sum\c u_j^-\cong{\mathcal A}_{a, b, c}'$ with $L_mu_j^-=(a+bm+j)u_{m+j}^-$, $H_mu_j^-=cu_{m+j}^-$ and $G_{m}^-u_j^-=0$  for any $m, j\in\z$, for some $a, b, c\in\c$.

Suppose that \BE u_i^-\in (G^-)^nU^+\label{neq001}\EE  for some $i\in\z$ and $n\in\z_+$.
By the actions of suitable $L_m$'s or $H_m$'s on $(\ref{neq001})$ we get $U^-\subset (G^-)^nU^+$.

In this case,
\BE V=\sum_{i=0}^n(G^+)^{i}U^-=\sum_{j=0}^n(G^-)^{j}U^+\label{v000}  \EE
and $(G^+)^{n}U^-\subset(G^+)^n(G^-)^{n}U^+\subset U^+$ (see the diagram below).
Moreover $(G^+)^{n}U^-\ne 0$ (in fact, if $(G^+)^{n}U^-=0$, replace $U^+$ by $U^-$ and repeate the above step, one has $(G^-)^{n}U^+=0$).
By the irreducibility of $U^+$ as $\mathfrak t$-module, we get \BE (G^+)^{n}U^-=U^+\label{mp=0}. \EE
So we have the following diagram:

\centerline{\xymatrix{
U^+\ar@{->}[r]\ar@{=}[d]^\cup&G^-U^+\ar@{->}[r]\ar@{<-}[d]^\cup&\cdots\ar@{->}[r]&(G^-)^{n-1}U^+\ar@{->}[r]\ar@{<-}[d]^\cup&(G^-)^{n}U^+\ar@{<-}[d]^\cup\\
(G^+)^{n}U^-\ar@{<-}[r]&(G^+)^{n-1}U^-\ar@{<-}[r]&\cdots\ar@{<-}[r]&G^+U^-\ar@{<-}[r]&U^-}}

Moreover from the diagram we see that $(G^-)^{n}U^+=U^-$ and $(G^-)^{i}U^+=(G^+)^{n-i}U^-$ for all $0\le i\le n$. In fact,
 from $U^-\subset (G^-)^{n}U^+$, we can get $(G^+)^{n}U^-\subset U^+$.  Due to that $U^+$ is irreducible,  $U^+=(G^-)^{n}U^-$. So
$(G^-)^{i}U^+\subset (G^+)^{n-i}U^-$ by actions of suitable $G^-$.

Now by Lemma \ref{lemmn=3} below we see that $n\le 2$ and get the proposition.

If $n=1$, then $V=U^++U^-$ and $G^-U^+=U^-$ by (\ref{mp=0}).

If $n=2$, then $V=U^++G^-U^++U^-$ and $G^-U^+=G^+U^-$ by (\ref{mp=0}).
\end{proof}

For convenience, set $K_{r,s}=[G_r^+, G_s^-]=-2L_{r+s}+(r-s)H_{r+s}$, then
we have the following equalities:
\begin{eqnarray} && K_{r,s}u_i^-=(-2(a+b(r+s)+i)+(r-s)c)u_{r+s+i}^-,\label{kl1}\\
  && [K_{r,s}, G_t^+]=2(r-t)G_{r+s+t}^+.\label{kg1}
\end{eqnarray}

\begin{lemm} \label{lemmn=2}
For any $r_1, r_2, s_1, s_2\in\z$, \BE G_{r_1}^-G_{r_2}^-G_{s_1}^+G_{s_2}^+u_i^-=d(s_2-s_1)(r_1-r_2)u_{i+r+s+s_1+s_2}^-,\label{gg2}\EE
where $d=(2b+c)(2+c-2b)$.
\end{lemm}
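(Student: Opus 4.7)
The plan is to push the $G^-$'s to the right until they act on $u_i^-$, where they vanish by hypothesis, leaving only the scalar contributions from the super-bracket $[G^-_r, G^+_s] = K_{s,r}$. Thus the quartic expression collapses to a $\mathbb{C}$-linear combination of terms of the form $K\cdot K\cdot u_i^-$ and $K\cdot u_i^-$, each of which is an explicit scalar multiple of a single basis vector $u^-_j$ by (\ref{kl1}).

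Concretely, I would first apply the super-anticommutation $G^-_{r_2}G^+_s = K_{s,r_2} - G^+_s G^-_{r_2}$ twice to obtain
\[
G^-_{r_2}G^+_{s_1}G^+_{s_2}u_i^- = K_{s_1,r_2}G^+_{s_2}u_i^- - G^+_{s_1}K_{s_2,r_2}u_i^-,
\]
since the tail term $G^+_{s_1}G^+_{s_2}G^-_{r_2}u_i^- = 0$. Next, applying $G^-_{r_1}$ and using the identity $[G^-_t, K_{s,r}] = 2(t-r)G^-_{s+r+t}$ (the $G^-$-analogue of (\ref{kg1}), derived from (\ref{gd2})) to commute $G^-_{r_1}$ through the $K$'s, together with $G^-_\cdot u_i^- = 0$, I would arrive at
\[
X := G^-_{r_1}G^-_{r_2}G^+_{s_1}G^+_{s_2}u_i^- = K_{s_1,r_2}K_{s_2,r_1}u_i^- - K_{s_1,r_1}K_{s_2,r_2}u_i^- + 2(r_1-r_2)K_{s_2,\,r_1+s_1+r_2}u_i^-.
\]

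The remaining step is purely arithmetic. Writing $\lambda(s,r,j) := -2a - 2j + (c-2b)s - (c+2b)r$ so that $K_{s,r}u_j^- = \lambda(s,r,j)u^-_{s+r+j}$, each summand of $X$ becomes an explicit polynomial in $(r_1, r_2, s_1, s_2)$ times $u^-_{i+r_1+r_2+s_1+s_2}$. The main obstacle, though mechanical, is that applying $K$ twice forces a shift $j \mapsto s+r+j$ in the second factor, which complicates the bookkeeping. Crucially, the coefficient is manifestly antisymmetric in $(r_1, r_2)$ --- the first two summands swap under $r_1 \leftrightarrow r_2$ and the third carries the prefactor $(r_1-r_2)$ --- so $(r_1-r_2)$ factors out and the residual factor is affine in $(s_1, s_2)$. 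Collecting terms then yields $(c+2b)(2+c-2b)(s_2-s_1) = d(s_2-s_1)$, which delivers the claim.
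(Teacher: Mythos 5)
Your proof is correct and follows essentially the same route as the paper: both use $G^-G^+=K-G^+G^-$ together with $G^-_\cdot u_i^-=0$ to collapse the quartic expression into a three-term combination of $K$-scalars evaluated via (\ref{kl1}) (your bracket $[G_t^-,K_{s,r}]=2(t-r)G_{s+r+t}^-$ is the correct $G^-$-counterpart of (\ref{kg1}), and your intermediate identity for $X$ checks out numerically, e.g.\ at $a=i=0$, $r_1=1$, $r_2=0$, $s_1=0$, $s_2=1$ it gives $c^2-4b^2+4b+2c=d$). The only cosmetic difference is the order of commutation --- the paper pushes $K_{s_1,r_2}$ past $G_{s_2}^+$, producing the contraction $2(s_1-s_2)K_{s_1+s_2+r_2,\,r_1}u_i^-$, whereas you push $G_{r_1}^-$ past $K_{s_1,r_2}$, producing $2(r_1-r_2)K_{s_2,\,r_1+s_1+r_2}u_i^-$ --- and the final ``collecting terms,'' which you sketch with a correct antisymmetry argument, is the same mechanical computation the paper carries out explicitly.
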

\begin{proof}
By (\ref{kl1}) and (\ref{kg1}) we have
\begin{eqnarray*}&&G_{r_1}^-G_{r_2}^-G_{s_1}^+G_{s_2}^+u_i^-\\
&=&G_{r_1}^-K_{s_1, r_2}G_{s_2}^+u_i^--G_{r_1}^-G_{s_1}^+G_{r_2}^-G_{s_2}^+u_i^-\\
&=&G_{r_1}^-G_{s_2}^+K_{s_1, r_2}u_i^-+2(s_1-s_2)G_{r_1}^-G_{s_2+s_1+r_2}^+u_i^--G_{r_1}^-G_{s_1}^+K_{s_2, r_2}u_i^-\\
&=&K_{s_2, r_1}K_{s_1, r_2}u_i^--K_{s_1, r_1}K_{s_2, r_2}u_i^-+2(s_1-s_2)K_{s_1+s_2+r_2, r_1}u_i^-\\
&=&(-2(a+b(s_1+r_2)+i)+(s_1-r_2)c)(-2(a+b(s_2+r_1)+i+s_1+r_2)+(s_2-r_1)c)u^-\\
&&-(-2(a+b(s_2+r_2)+i)+(s_2-r_2)c)(-2(a+b(s_1+r_1)+i+s_2+r_2)+(s_1-r_1)c)u^-\\
&&+2(s_1-s_2)(-2(a+b(r_1+r_2+s_2+s_2)+i)+(s_1+s_2+r_2-r_1)c)u^-\\
&=&(2+c-2b)(2b+c)(s_2-s_1)(r_1-r_2)u^-,
\end{eqnarray*}where $u^-=u_{i+r_1+r_2+s_1+s_2}^-$.
\end{proof}

Now we shall derive a key identity.

\begin{lemm} \label{lemmn=3}
For any $r_1, r_2, r_3, s_1, s_2, s_3\in\z$, \BE G_{r_1}^-G_{r_2}^-G_{r_3}^-G_{s_1}^+G_{s_2}^+G_{s_3}^+U^-=0.\label{gg3}\EE
\end{lemm}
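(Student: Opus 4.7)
The plan is to reduce to the weight vectors $u_i^-$ spanning $U^-$ (each annihilated by every $G_r^-$) and deploy Lemma \ref{lemmn=2}. For each fixed $i\in\z$, I would first expand $G_{r_3}^-G_{s_1}^+G_{s_2}^+G_{s_3}^+u_i^-$ by pushing $G_{r_3}^-$ to the right through $G_{s_1}^+G_{s_2}^+G_{s_3}^+$ via the anticommutators $\{G_{r_3}^-,G_{s_j}^+\}=K_{s_j,r_3}$. Because $G_{r_3}^-u_i^-=0$, the straight-through term vanishes and only three ``contraction'' terms remain. Each $K_{s_j,r_3}$ so produced is then pulled past the remaining $G^+$'s using $[K_{r,s},G_t^+]=2(r-t)G_{r+s+t}^+$, until it hits $u_i^-$ and acts as a scalar $k_j(i)$ linear in $s_j$, $r_3$ and $i$, determined by the parameters of $U^-\cong\mathcal{A}'_{a,b,c}$. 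After the dust settles, $G_{r_3}^-G_{s_1}^+G_{s_2}^+G_{s_3}^+u_i^-$ is an explicit $\c$-linear combination of six terms of the form $G_p^+G_q^+u_i^-$, in each of which $p+q=r_3+s_1+s_2+s_3$.

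Next I would apply $G_{r_1}^-G_{r_2}^-$ to this sum and invoke Lemma \ref{lemmn=2} term-by-term. Each summand $G_p^+G_q^+u_i^-$ becomes $d(q-p)(r_1-r_2)u_{i+r_1+r_2+p+q}^-$, and because $p+q$ is constant across all six terms, every summand produces the same weight vector $u_{i+r_1+r_2+r_3+s_1+s_2+s_3}^-$. The whole expression therefore factors as $d(r_1-r_2)\cdot S\cdot u_{i+r_1+r_2+r_3+s_1+s_2+s_3}^-$ with a single explicit scalar $S=S(r_3,s_1,s_2,s_3,i)$, and it suffices to show $S\equiv 0$.

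Finally, $S$ splits naturally into a ``polynomial piece'' (arising from the $[K,G^+]$-commutators) and a ``$k$-piece'' (arising from the scalars $k_j(i)$). In the polynomial piece the linear-in-$r_3$ coefficient is $-2(s_1-s_2)+2(s_1-s_3)-2(s_2-s_3)=0$, and the $r_3$-independent remainder telescopes to zero by an elementary symmetric identity in $s_1,s_2,s_3$. For the $k$-piece, writing $k_j(i)=\alpha+\beta s_j$ with $\alpha$ independent of $j$, the alternating sum $k_1(i)(s_3-s_2)-k_2(i)(s_3-s_1)+k_3(i)(s_2-s_1)$ vanishes because both $(s_3-s_2)-(s_3-s_1)+(s_2-s_1)=0$ and $s_1(s_3-s_2)-s_2(s_3-s_1)+s_3(s_2-s_1)=0$.

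I expect the main obstacle to be bookkeeping: the expansion produces six terms whose $G^+$-indices are shifted by $r_3$ in different slots, and one must feed them into Lemma \ref{lemmn=2} with the correct sign and the correct $(q-p)$ factor. The cancellation itself is structural, independent of the parameters of $U^-$, and in particular independent of whether $d=0$.
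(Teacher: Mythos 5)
Your proposal is correct and follows essentially the same route as the paper: push $G_{r_3}^-$ rightward via the anticommutators $K_{s_j,r_3}$, move each $K$ past the remaining $G^+$'s using $[K_{r,s},G_t^+]=2(r-t)G_{r+s+t}^+$ together with the scalar action of $K_{s_j,r_3}$ on $u_i^-$, then evaluate the resulting six terms by Lemma \ref{lemmn=2} and verify that the total scalar vanishes (your two cancellation identities match the paper's six-term cancellation exactly). The only nitpick is cosmetic: for the three contraction terms the vector is $u_{i+s_j+r_3}^-$ rather than $u_i^-$, so $p+q$ alone is not $r_3+s_1+s_2+s_3$ there, but the total weight shift is, so all terms land on the same $u_{i+r_1+r_2+r_3+s_1+s_2+s_3}^-$ as you claim.
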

\begin{proof}
\begin{eqnarray*}&&G_{r_1}^-G_{r_2}^-G_{r_3}^-G_{s_1}^+G_{s_2}^+G_{s_3}^+u_i^-\\
&=&G_{r_1}^-G_{r_2}^-K_{s_1, r_3}G_{s_2}^+G_{s_3}^+u_i^--G_{r_1}^-G_{r_2}^-G_{s_1}^+G_{r_3}^-G_{s_2}^+G_{s_3}^+u_i^-\\
&=&2(s_1-s_2)G_{r_1}^-G_{r_2}^-G_{s_1+s_2+r_3}^+G_{s_3}^+u_i^-+2(s_1-s_3)G_{r_1}^-G_{r_2}^-G_{s_2}^+G_{s_1+s_3+r_3}^+u_i^-\\
&&+G_{r_1}^-G_{r_2}^-G_{s_2}^+G_{s_3}^+K_{s_1, r_3}u_i^--G_{r_1}^-G_{r_2}^-G_{s_1}^+K_{s_2, r_3}G_{s_3}^+u_i^-+G_{r_1}^-G_{r_2}^-G_{s_1}^+G_{s_2}^+G_{r_3}^-G_{s_3}^+u_i^-\\
&=&2(s_1-s_2)G_{r_1}^-G_{r_2}^-G_{s_1+s_2+r_3}^+G_{s_3}^+u_i^-+2(s_1-s_3)G_{r_1}^-G_{r_2}^-G_{s_2}^+G_{s_1+s_3+r_3}^+u_i^-\\
&&+G_{r_1}^-G_{r_2}^-G_{s_2}^+G_{s_3}^+K_{s_1, r_3}u_i^--2(s_2-s_3)G_{r_1}^-G_{r_2}^-G_{s_1}^+G_{s_3+s_2+r_3}^+u_i^-\\
&&-G_{r_1}^-G_{r_2}^-G_{s_1}^+G_{s_3}^+K_{s_2, r_3}u_i^-+G_{r_1}^-G_{r_2}^-G_{s_1}^+G_{s_2}^+K_{s_3, r_3}u_i^-\\
&=&2d(s_1-s_2)({r_1}-{r_2})(s_3-s_1-s_2-r_3)u^-+2d(s_1-s_3)(r_1-r_2)(s_1+s_3+r_3-s_2)u^-\\
&&-d(2a+(2b-c)s_1+(2b+c)r_3+2i)(r_1-r_2)(s_3-s_2)u^-\\
&&-2d(s_2-s_3)(r_1-r_2)(s_2+s_3+r_3-s_1)u^-\\
&&+d(2a+(2b-c)s_2+(2b+c)r_3+2i)(r_1-r_2)(s_3-s_1)u^-\\
&&-d(2a+(2b-c)s_3+(2b+c)r_3+2i)(r_1-r_2)(s_2-s_1)u^-=0
\end{eqnarray*}for any $i\in\z$, where $u^-=u_{i+r_1+r_2+r_3+s_1+s_2+s_3}^-$.
\end{proof}

Next we shall determine the precise structure of the simple cuspidal $\mathfrak g$-modules.

\begin{prop}

Let $V$ be a simple cuspidal $\mathfrak g$-module of the intermediate series, then $V$ is isomorphic to the simple sub-quotient of ${\mathcal R}_{a, b}$ (up to parity-change) for some $a, b\in\c$, where ${\mathcal R}_{a, b}:=\sum\c v_i^++\sum\c v_i^-$  with
\begin{eqnarray}
&&L_mv_i^+=(a+i+bm)v_{m+i}^+,\\
&& L_mv_i^-=(a+i+(b-\frac{1}{2})m)v_{m+i}^-,\\
&&H_mv_i^+=(2-2b)v_{m+i}^+,\\
&& H_mv_i^-=(1-2b)v_{m+i}^-,\\
&&G_m^-v_i^+=v_{m+i}^-,\\
&& G_m^+v_i^-=-(2a+(4b-2)m+2i)v_{m+i}^+
\end{eqnarray} for all $m, i\in\z$.

\end{prop}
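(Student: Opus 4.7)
Let $V$ be a simple cuspidal $\mathfrak{g}$-module of length one. By the length-one case of Proposition~\ref{pn=2}, $V = U^+ \oplus U^-$, where each $U^\pm$ is a simple $\mathfrak{q}^\pm$-submodule of intermediate-series type with $G^\pm U^\pm = 0$ and $G^\mp U^\pm = U^\mp$. By Proposition~\ref{pssn0}, as a $\mathfrak{t}$-module each $U^\pm$ is a simple sub-quotient of some $\mathcal{A}_{a_\pm, b_\pm, c_\pm}$, with standard basis $\{v_i^\pm\}_{i \in \z}$. Since $G_0^-\colon U^+ \to U^-$ is weight-preserving and nonzero, $a_+ - a_- \in \z$; after reindexing $\{v_j^-\}$ I assume $a_+ = a_- =: a$, and after applying $\Pi$ if necessary, take $U^+$ to be the even summand. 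Introduce scalars by $G_m^- v_i^+ = \alpha(m, i) v_{m+i}^-$ and $G_p^+ v_j^- = \beta(p, j) v_{p+j}^+$; the remainder of the proof determines $\alpha$, $\beta$, and the parameters $b_\pm, c_\pm$.

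\textbf{Solving for $\alpha$.} Applying $[H_n, G_m^-] = -G_{m+n}^-$ to $v_i^+$ yields
\[
c_- \alpha(m, i) - c_+ \alpha(m, n+i) = -\alpha(m+n, i).
\]
Setting $n = 0$ forces $c_+ - c_- = 1$; setting $m = 0$ and writing $f(i) := \alpha(0, i)$ gives $\alpha(n, i) = c_+ f(n+i) - c_- f(i)$. Substituting back and using $c_+ - c_- = 1$ reduces the recurrence to the discrete parallelogram identity $f(m+n+i) + f(i) = f(m+i) + f(n+i)$, whence $f(i) = \alpha_0 + \gamma i$ is affine and $\alpha(m, i) = \alpha_0 + c_+ \gamma m + \gamma i$. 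Applying $[L_n, G_m^-] = (m - n/2) G_{m+n}^-$ to $v_i^+$ and matching coefficients of $m, n, i$ and the constant term in the resulting polynomial identity produces two branches: either $(\mathrm{I})$ $\gamma = 0$, $\alpha_0 \neq 0$, and $b_- = b_+ - \tfrac{1}{2}$; or $(\mathrm{II})$ $\gamma \neq 0$, $b_- = b_+ + \tfrac{1}{2}$, $c_+ = 2 b_+$, and $\alpha_0 = \gamma a$. Branch $(\mathrm{II})$ is the image of branch $(\mathrm{I})$ under the involutive automorphism $\sigma$ of $\mathfrak{g}$ given by $G_m^\pm \mapsto G_m^\mp$, $H_m \mapsto -H_m$, $L_m \mapsto L_m$, $C \mapsto C$, so after twisting by $\sigma$ (and by $\Pi$ to restore the parity convention) I may work in branch $(\mathrm{I})$. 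Uniformly rescaling the basis of $U^-$ then normalizes $\alpha_0 = 1$, giving $\alpha \equiv 1$; set $b := b_+$, so $b_- = b - \tfrac{1}{2}$.

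\textbf{Solving for $\beta$ and concluding.} Applying $[G_p^+, G_q^-] = -2 L_{p+q} + (p - q) H_{p+q}$ to $v_i^+$ and using $G^+ U^+ = 0$ gives
\[
\beta(p, q + i) = -2(a + i + b(p+q)) + (p - q) c_+.
\]
Since $\beta$ is a function of $p$ and $j := q + i$ only, the coefficient of $q$ on the right side must vanish, which forces $c_+ = 2 - 2b$; this then yields $c_- = 1 - 2b$ and $\beta(p, j) = -(2a + (4b - 2) p + 2 j)$, precisely the defining formulas of $\mathcal{R}_{a, b}$. Hence $V$ is isomorphic, up to parity-change, to the simple sub-quotient of $\mathcal{R}_{a, b}$. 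The principal obstacle is the reduction of branch $(\mathrm{II})$ to branch $(\mathrm{I})$: one must verify that combining the $\sigma$-twist with $\Pi$ genuinely carries every branch-$(\mathrm{II})$ module onto some $\mathcal{R}_{a, b'}$ after a suitable relabeling, so that the statement's "up to parity-change" absorbs this second branch rather than missing an independent family of simple intermediate-series modules.
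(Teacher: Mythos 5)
You have reproduced the paper's strategy faithfully: the paper's proof starts from the same decomposition $V=U^++U^-$ of Proposition \ref{pn=2}, writes $G_r^-v_i^+=g(r,i)v_{r+i}^-$, and extracts exactly your two relations (its \eqref{example10} and \eqref{example11}), getting $c^-=c-1$ and splitting into the cases $c=0$, $c=1$, $c\neq0,1$. Two remarks on your execution. First, a small gap on your side: the substitution that turns the $H$-recurrence into the parallelogram identity for $f$ produces the identity multiplied by $c_+c_-$, so it proves affinity of $f$ only when $c_+\neq0,1$; in the degenerate cases $c_+\in\{0,1\}$ the recurrence collapses (for $c_+=0$ it only says $\alpha$ is independent of its first argument) and one must bring in the $\beta$-coefficients and the bracket $[G_p^+,G_q^-]$ separately, which is precisely what the paper does in its Cases 1 and 2. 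Second, and to your credit, your branch $(\mathrm{II})$ is genuine and is exactly what the paper's Case 3 loses: the identity \eqref{example13} only yields that $g(\cdot\,,0)$ is \emph{affine}, yet the paper jumps from it to ``$g(m,0)=g(0,0)$'' with no justification; the discarded affine solutions, with $c_+=2b_+$, $b_-=b_++\tfrac12$, $\alpha_0=\gamma a$, are your branch $(\mathrm{II})$, and your $L$-relation branch analysis is correct as computed.

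The obstacle you flag at the end, however, is fatal to the route rather than a loose end: twisting by the automorphism $\sigma$ ($L_m\mapsto L_m$, $H_m\mapsto -H_m$, $G_m^\pm\mapsto G_m^\mp$, $C\mapsto C$) changes the isomorphism class, so your argument only shows $V^\sigma\cong{\mathcal R}_{a,b'}'$ up to parity, i.e., it classifies $V$ up to $\sigma$, not up to $\Pi$. Indeed branch $(\mathrm{II})$ cannot be absorbed: the twist ${\mathcal R}_{a,b}^\sigma$ (same space, action $x\cdot v=\sigma(x)v$) is a simple intermediate-series module realizing branch $(\mathrm{II})$ with $\gamma=-2$, and its two $\mathfrak t$-constituents are ${\mathcal A}_{a,\,b,\,2b-2}'$ and ${\mathcal A}_{a,\,b-\frac12,\,2b-1}'$. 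The quantity $c+2\beta$ attached to a constituent ${\mathcal A}_{a,\beta,c}'$ is an isomorphism invariant (except for the coincidence ${\mathcal A}_{a,0,0}\cong{\mathcal A}_{a,1,0}$, available only when $c=0$), and it is untouched by $\Pi$; every ${\mathcal R}_{a',b'}$ carries the values $2$ and $0$ on its two halves, while a branch-$(\mathrm{II})$ module carries the \emph{same} value $4b_+$ on both halves. Hence for $a\notin\z$ and $b_+\notin\{0,\tfrac12\}$ no branch-$(\mathrm{II})$ module is isomorphic to any ${\mathcal R}_{a',b'}'$, with or without parity change, so the second branch is an independent family rather than a relabeling. (The same family surfaces inside the paper itself as the $2b-c=2$ degeneration of ${\mathcal R}_{a,b,c}$, where the remark following Proposition \ref{n=2structure} asserts the same identification with ${\mathcal R}_{a,b}$ that your $\sigma$-twist step asserts, equally without proof.) So your proof, like the paper's, is incomplete at exactly this point, and the verification you correctly identify as ``the principal obstacle'' in fact fails: closing the argument requires either adjoining the twisted family ${\mathcal R}_{a,b}^\sigma$ to the statement or weakening ``up to parity-change'' to ``up to $\Pi$ and the involution $\sigma$.''
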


\begin{proof}
In this case  $V=U^++U^-$ and $G^-U^+=U^-$ by Proposition \ref{pn=2}, where $U^\pm$ are simple $\mathfrak t$-modules.

Set  $U^+=\sum \c v_i^+\cong {\mathcal A}_{a, b, c}'$ for some $a, b, c\in\c$. Due to that $G^-U^+=U^-$, we can suppose that $U^-=\sum\c v_i^-\cong {\mathcal A}_{a, b^-, c^-}'$ for some $b^-, c^-\in\c$ and $G_r^-v_i^+=g(r, i)v_{r+i}^-$ for almost all $0\ne g(r, i)\in\c$.
By $[L_m, G_n^-]v_i^+=(n-\frac m2)G_{m+n}^-v_i^+$ and $[H_m, G_n^-]v_i^+=-G_{m+n}^-v_i^+$, we have
\begin{eqnarray}
&&\hskip-20pt (a+b^-m+n+i)g(n, i){-}(a+bm+i)g(n, m+i){=}(n-\frac m2)g(m+n, i), \label{example10}\\
&&c^-g(n, i)-cg(n, m+i)=-g(m+n, i),\ \forall m, n, i\in\z.\label{example11}
\end{eqnarray}
Let $m=0$ in (\ref{example11}), then we get $c^-=c-1$.

\noindent{\bf Case 1.} $c=0$.

In fact, if $c=0$, then $c^-=-1$. For any $n, i\in\z$,
by (\ref{example11}) we get $g(n, i)=g(i)$ for some $g(i)\in\c$.  So we can suppose that $G_n^+v_i^-=h(n, i)v_{n+i}^+$ for some $h(n, i)\in \c$. By $[H_m, G_n^+]v_i^-=G_{m+n}^+v_i^-$, we have $h(n,i)=h(n+i)$ for some $h(n)\in\c$ for any $n, i\in\z$. By
$$[G_n^+, G_m^-]v_i^-=-2L_{m+n}v_i^++(n-m)H_{m+n}v_i^-,$$
we get
$$h(n+i)g(n+i)=-2(a+b^-(m+n)+i)-(n-m)$$
for any $m, n, i\in\z$. Then $b^-=\frac12$. Similarly, by $[G_n^+, G_m^-]v_i^+=-2L_{m+n}v_i^++(n-m)H_{m+n}v_i^+$, we get $b=1$.
Then $g(n, i)=g(0,0)$ for all $n, i\in\z$ follows by (\ref{example10}).

\noindent{\bf Case 2.}  $c=1$.

 It is similar to prove that $g(n, i)=g(0,0)$ for all $n, i\in\z$ as in Case 1.

\noindent{\bf Case 3.} $c\ne 0, 1$.

Setting $i=0$ in (\ref{example11}) we get
\BE g(n, m)=\frac{c-1}cg(n, 0)+\frac1c g(m+n, 0),\ \forall m, n\in\z.\label{example12}\EE
Substituting (\ref{example12}) into (\ref{example11}) we get
\BE g(m+n+i, 0)=g(n+i, 0)+g(n+m, 0)-g(n, 0), \  \forall n, m, i\in\z.\label{example13}\EE
By (\ref{example13}) we get $g(m, 0)=g(0, 0)$ and then $g(m, n)=g(0, 0)$ for all $m, n\in\z$ by (\ref{example11}) again.

So in all cases we have $g(m, n)=g(0, 0)$ for all $m, n\in\z$.
Moreover, we have $b^-=b-\frac12$ by (\ref{example10}). By direct calculations,  we obtain
\begin{eqnarray*}
&&L_mv_i^+=(a+i+bm)v_{m+i}^+,\quad L_mv_i^-=(a+i+(b-\frac{1}{2})m)v_{m+i}^-,\nonumber \\
&&H_mv_i^+=(2-2b)v_{m+i}^+,\quad H_mv_i^-=(1-2b)v_{m+i}^-,\nonumber\\
&&G_m^-v_i^+=v_{m+i}^-,\quad G_m^+v_i^-=-(2a+(4b-2)m+2i)v_{m+i}^+\nonumber
\end{eqnarray*} for all $m, i\in\z$.
\end{proof}

\begin{rema}  ${\mathcal R}_{a, b}$ is not simple if and only if $a=0, b=1$ or $a=0, b=\frac12$. All indecomposable modules of of the intermediate series were given in \cite{FJS}.
\end{rema}

\begin{prop}\label{n=2structure}
Let $V$ be a simple cuspidal $\mathfrak g$-module with  ${\rm b}(V)=2$, then $V$ is isomorphic to ${\mathcal R}_{a, b, c}$ $($up to parity-change$)$ for some $a, b,c\in\c$ and $2b\pm c\ne 2$, where ${\mathcal R}_{a, b, c}:=(U+U^{+-})\oplus (U^++U^-)$ with
\begin{eqnarray}
&&L_mv_i=(a+i+bm)v_{m+i},\  \   L_mv_k^\pm=(a+k+(b-\frac{1}{2})m)v_{m+k}^\pm, \label{n=2L}\\
&&H_mv_i= c v_{m+i},\ \ H_mv_k^\pm=(c\pm1)v_{m+k}^\pm,\label{n=2hm}\\
&&G_r^\pm v_i=v_{r+i}^\pm,\ \  G_r^\pm v_i^\pm=0 \label{n=2gv},\\
&&L_mv_i^{+-}=(a+i+(b-1)m)v_{m+i}^{+-}+\frac12(2b-c-2)m^2v_{i+m},\label{n=2L+-}  \\
&& H_mv_i^{+-}=c v_{m+i}^{+-}-m(2b-c-2)v_{m+i}, \label{n=2H+-} \\
&&G_r^+v_k^-=v_{k+r}^{+-}+(c+2-2b)rv_{k+r},\label{n=2g+}\\
&&G_r^-v_k^+= -v_{r+k}^{+-}-\big(2a+2k+(2b+c)r\big)v_{k+r},\label{n=2g-}\\
&&G_r^-v_i^{+-}=-(2a+2i+(2b+c-2)r)v_{r+i}^-,\\
&&G_r^+v_i^{+-}=(2b-c-2)rv_{r+i}^+, \  \forall m, r, i\in\z.
\end{eqnarray}

\end{prop}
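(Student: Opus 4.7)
The plan is to start from Proposition~\ref{pn=2}: for a length-$2$ simple cuspidal module, $V = U^+ + M + U^-$ with $M := G^-U^+ = G^+U^-$, where $U^\pm$ are simple $\mathfrak{q}^\pm$-modules on which $G^\pm$ acts trivially. By Proposition~\ref{pssn0} each $U^\pm$ is a $\mathfrak{t}$-module of the intermediate series, and I would first parameterize them as $L_m v_i^\pm = (a_\pm + i + b_\pm m) v_{m+i}^\pm$ and $H_m v_i^\pm = c_\pm v_{m+i}^\pm$. Using $G_p^+ v_i^+ = G_p^- v_i^- = 0$ together with the bracket $[G_p^+, G_q^-] = -2L_{p+q} + (p-q)H_{p+q}$, the composites $G_p^+ G_q^- v_i^+$ and $G_p^- G_q^+ v_i^-$ are immediately expressible as scalar multiples of $v_{p+q+i}^\pm$; comparing them (and invoking $[G_p^\pm, G_q^\pm] = 0$) will force $a_+ \equiv a_- \pmod{\z}$ and yield first relations among the remaining parameters.

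The heart of the argument is to pin down the $\mathfrak{t}$-structure of the middle piece $M$, which has dimension $2$ per weight. Since every cuspidal simple $\mathfrak{t}$-module is of the intermediate series (Theorem~\ref{thm-twisted}), $M$ has a composition series with two intermediate-series quotients; I would pick a simple $\mathfrak{t}$-submodule $U = \sum \c v_i \subset M$ and complete each weight space to a basis $\{v_i, v_i^{+-}\}$. The $L_m, H_m$ actions on $v_i^{+-}$ are then determined up to the cocycle governing the extension class, which should account for the anomaly terms $\tfrac{1}{2}(2b-c-2)m^2$ in~(\ref{n=2L+-}) and $-m(2b-c-2)$ in~(\ref{n=2H+-}).

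To complete the structure, I would expand
\[
G_r^- v_i^+ = \alpha(r,i) v_{r+i} + \beta(r,i) v_{r+i}^{+-}, \qquad G_r^+ v_i^- = \gamma(r,i) v_{r+i} + \delta(r,i) v_{r+i}^{+-},
\]
and similarly parameterize $G_r^\pm$ on $v_i$ and $v_i^{+-}$. The Jacobi identities with $L_m, H_m$, combined with $[L_m, G_n^\pm] = (n - m/2) G_{m+n}^\pm$ and $[H_m, G_n^\pm] = \pm G_{m+n}^\pm$, impose functional equations on $\alpha, \beta, \gamma, \delta$ forcing them to be polynomial in $r, i$ of low degree; after absorbing residual freedom by rescaling basis vectors, everything reduces to the triple $(a, b, c)$, from which formulas~(\ref{n=2gv})--(\ref{n=2g-}) can be read off. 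The condition $2b \pm c \ne 2$ should emerge from Lemma~\ref{lemmn=2}: translated into the parametrization~(\ref{n=2L})--(\ref{n=2hm}) for $U^-$, the constant $d$ there becomes $(2b+c-2)(c-2b+2)$, so requiring $(G^-)^2(G^+)^2 U^- \ne 0$ (which is needed for $V$ to have length exactly $2$) is precisely $2b+c \ne 2$ and $2b-c \ne 2$.

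The main obstacle I expect lies in the extension analysis for $M$ and the exact identification of the cocycle. A priori $M$ could split as a direct sum of two intermediate-series modules, but this scenario must be excluded: a split $M$ together with the $G^\pm$-compatibilities would force $V$ to decompose under $\frak g$, contradicting simplicity. Extracting the precise form $\tfrac{1}{2}(2b-c-2)m^2$ of the cocycle, as opposed to merely knowing that some non-trivial extension exists, will require a careful joint analysis of the $L_m$ and $G_r^\pm$ commutators, and this bookkeeping is the most computationally delicate step of the proof.
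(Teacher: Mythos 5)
Your proposal is correct in substance and shares the paper's skeleton: start from Proposition \ref{pn=2}, put intermediate-series coordinates on $U^\pm$ and on a simple $\mathfrak t$-submodule $U\subset M=G^-U^+$ via Theorem \ref{thm-twisted}, expand the odd actions with unknown coefficient functions, and solve the functional equations coming from $[H_m,G_r^\pm]$, $[L_m,G_r^\pm]$ and $[G_r^+,G_s^-]$. It differs in two ways worth recording. First, where you propose an a priori extension-class analysis of the middle layer $M$ (determining the cocycle terms in (\ref{n=2L+-})--(\ref{n=2H+-}) and separately excluding a split $M$), the paper sidesteps extension cohomology entirely: it normalizes $v_k^{+-}:=G_0^+v_k^-$ (so $g(0,k)=0$), solves $g(r,k)=(c+2-2b)r$ and $h(r,k)=-(2a+2k+(2b+c)r)$ from $g+h=-2(a+(2b-1)r+k)$ together with the $[H_m,G_r^\pm]$ equations, and then reads off the anomaly terms in $L_mv_i^{+-}$ and $H_mv_i^{+-}$ by applying $L_m,H_m$ to $G_r^+v_k^-=v_{k+r}^{+-}+(c+2-2b)rv_{k+r}$; in particular the split case $2b-c=2$ needs no separate exclusion (it is deferred to the remark following the proposition), and your claim that a split $M$ would force $V$ to \emph{decompose} is slightly too strong --- what actually arises is a proper nonzero submodule, e.g.\ $\span_\c\{v_i^-,v_i^{+-}\mid i\in\z\}$, which already contradicts simplicity. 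Second, your derivation of $2b\pm c\ne2$ from Lemma \ref{lemmn=2} is a genuine improvement in transparency over the paper, which merely asserts simplicity if and only if $2b\pm c\ne2$: under the parameter shift $(b,c)\mapsto(b-\frac12,\,c-1)$ appropriate to $U^-$, the constant $d$ indeed becomes $(2b+c-2)(2+c-2b)$, and since Proposition \ref{pn=2} gives $(G^+)^2U^-=U^+$ and $(G^-)^2U^+=U^-$ in length $2$, one needs $d\ne0$, i.e.\ $2b\pm c\ne2$. The net comparison: the paper's route is computationally leaner, while yours supplies a conceptual frame for the cocycle and a principled source for the simplicity constraint; once the extension-class step is replaced by the direct bookkeeping with $L_m,H_m$ acting on the solved $G$-relations --- as your own final paragraph anticipates --- the two arguments coincide.
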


\begin{proof}

In this case $n=2$, $V=U^++G^-U^++U^-$ and $G^-U^+=G^+U^-$ by Proposition \ref{pn=2}.

First we can choose a simple $\mathfrak t$-module $U=\sum \c v_i\cong {\mathcal A}_{a, b, c}'$ of $G^-U^+$ for some $a, b, c\in\c$. Due to $U^\pm=G^\pm U$ are all simple $\mathfrak t$-modules, we can get $U^\pm=\sum\c v_i^{\pm}\cong {\mathcal A}_{a, b-\frac12, c\pm 1}'$ as in Case 1.
So we can obtain (\ref{n=2L}), (\ref{n=2hm}), (\ref{n=2gv}) and the coefficients of $v_{m+i}^{+-}$ in (\ref{n=2L+-}) and (\ref{n=2H+-}).

Notice that the quotient module $G^-U^+/U=\sum\c {\bar v}_i^{+-}$ (or $G^+U^-/U$) is also a simple $\mathfrak t$-module, and ismorphic to ${\mathcal A}_{a, b-1, c}'$ as in Case 1. So we can suppose that $$G_r^+v_k^-=v_{r+k}^{+-}+g(r, k)v_{r+k}$$ for some $g(r, k)\in\c$ and for any $r, k\in\z$.
Moreover, replace $v_k^{+-}+g(0, k)v_{k}$ by $v_k^{+-}$ one has $G_0^+v_k^-=v_{k}^{+-}$. It is \BE g(0, k)=0, \ \forall r, k\in\z. \label{eq0k0}\EE
By the action of $G_s^-$ on $G_r^+ v_i=v_{r+i}^+$ for any $r,s, i\in\z$, we can get
$$G_r^-v_k^+= -v_{r+k}^{+-}+h(r, k)v_{k+r}$$ for some $h(r, k)\in\c$.
By $[G_r^+, G_r^-]v_k=-2L_{2r}v_k$, one has
\BE g(r, k)+h(r, k)=-2(a+(2b-1)r+k), \ \forall r, k\in\z. \label{eq111} \EE
By $[H_m, G_r^-]v_k^+=- G_{m+r}^- v_k^+$ and $[H_m, G_r^+]v_k^-= G_{m+r}^+ v_k^-$, one has
\BE -f_1(m, r+k)+ch(r, k)-(c+1)h(r, m+k)=-h(m+r, k).\label{eq113}\EE
\BE f_1(m, r+k)+cg(r, k)-(c-1)g(r, m+k)=g(m+r, k).\label{eq115}\EE

Combining with (\ref{eq111}), (\ref{eq113}) and (\ref{eq115}) we get \BE g(m+r, k)-g(r, m+k)=(c+2-2b)m. \label{eq116}\EE
Setting $r=0$ and using (\ref{eq0k0}) and (\ref{eq111}), we get \BE g(m, k)=(c+2-2b)m, \ \ h(m, k)=-(2a+2k+(2b+c)m),\ \forall m, k\in\z. \label{eq116}\EE
So (\ref{n=2g+}) and (\ref{n=2g-}) hold. By actions of $L_m, H_m, G_r^\pm$ on (\ref{n=2g+}), we can easily get the remaining relations in the proposition. Moreover ${\mathcal R}_{a, b, c}$ is simple if and only if $2b\pm c\ne 2$.
\end{proof}

\begin{rema}

If $2b-c=2$, then $V=\span_\c\{v_i^-, v_i^{+-}\mid i\in\z\}$ is a simple module of the intermediate series, which is just isomorphic to the module ${\mathcal R}_{a, b}$ listed in Case 1 (up to parity-change).

If $2b+c=2$, then $V=\span_\c\{v_i^+, v_i^{+-}+2(a+i)v_i\mid i\in\z\}$ is a simple module of the intermediate series, which is also isomorphic to  ${\mathcal R}_{a, b}$ (up to parity-change).

If $2b-c=2b+c=2\, (b=1, c=0)$ and $a=0,
\frac12$, then $V$ is a trivial module.

\end{rema}

For convenience, we use ${\mathcal R}_{a, b, c}'$ to denote the simple sub-quotient of ${\mathcal R}_{a, b, c}$ in all cases $($including the module ${\mathcal R}_{a, b}$ in the case of $n=1)$.
${\mathcal R}_{a, b, c}$ (or ${\mathcal R}_{a, b, c}'$) is just corresponding to the finite conformal module given in \cite{CLam}.

\section{Quasi-finite modules over the $N=2$ Ramond algebra}
In this section,  we classify all simple weight modules with finite-dimensional weight spaces for the $N=2$ Ramond algebra.

\begin{theo} \label{pH1}
Let $V$ be a simple weight module with finite-dimensional weight spaces over $\frak g$. If $V$ is not a highest and lowest
module, then $V$ is cuspidal.
\end{theo}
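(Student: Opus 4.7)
The plan is a Mathieu-type argument, executed contrapositively: assuming $V$ is not cuspidal, I would produce a highest- or lowest-weight $\mathfrak{g}$-vector in $V$, contradicting the hypothesis.

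First I would show $\mathrm{Supp}(V)=\lambda+\mathbb{Z}$ for some $\lambda\in\mathbb{C}$. Since $L_0$ acts diagonally and $\mathfrak{g}$ is $\mathbb{Z}$-graded by $L_0$-eigenvalue, the support lies in one coset. A gap in the support would, combined with the simplicity of $V$ and the finite-dimensionality of each $\mathfrak{g}_n$, yield a nonzero weight vector annihilated by $\bigoplus_{n\ge N}\mathfrak{g}_n$ (or by $\bigoplus_{n\le -N}\mathfrak{g}_n$) for some $N$; an extremal $L_0$-eigenvalue argument inside the cyclic $\mathfrak{g}$-submodule of this vector then produces a highest- or lowest-weight vector, contradicting hypothesis.

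Next, restrict $V$ to $\mathfrak{t}=\mathfrak{g}_{\bar 0}$. Both $V_{\bar 0}$ and $V_{\bar 1}$ are weight $\mathfrak{t}$-modules with finite-dimensional weight spaces and full support. By Theorem \ref{thm-twisted} every simple $\mathfrak{t}$-subquotient is highest-weight, lowest-weight, or an intermediate-series module $\mathcal{A}'_{a,b,c}$. An intermediate-series subquotient contributes at most $1$ to every weight space, so the $\mathfrak{t}$-composition length of $V_{\bar 0}\oplus V_{\bar 1}$ is bounded by $\dim V_\mu$ for any $\mu$ in the support, hence is finite. If every simple $\mathfrak{t}$-subquotient of $V$ were of intermediate-series type, then $\dim(V_i)_\tau$ would be uniformly bounded by this composition length and $V$ would be cuspidal, contradicting the assumption. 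Therefore some simple $\mathfrak{t}$-subquotient $W$ of $V_{\bar 0}$ or $V_{\bar 1}$ is of highest- or lowest-weight $\mathfrak{t}$-type; by the $n\leftrightarrow -n$ symmetry of $\mathfrak{g}$ I may take $W$ to be highest-weight.

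The crux is upgrading this $\mathfrak{t}$-datum to a genuine $\mathfrak{g}$-highest-weight vector. Let $v\in V$ be a preimage of the $\mathfrak{t}$-highest-weight generator of $W$, so $L_n v,\,H_n v$ lie in a proper $\mathfrak{t}$-submodule $V'\subsetneq U(\mathfrak{t})v$ for every $n>0$. Working inside $U(\mathfrak{g})v$ and exploiting the $L_0$-weight filtration together with the finite weight multiplicities, I would run an iterative annihilation procedure using the relations
\[
[L_m,G^\pm_p]=(p-\tfrac{m}{2})G^\pm_{m+p},\qquad [G^+_p,G^-_q]=-2L_{p+q}+(p-q)H_{p+q}\quad(p+q\ne 0),
\]
which tie the odd action to the even one, to produce a vector $v^*\in U(\mathfrak{g})v$ killed by all $L_n,H_n,G^\pm_n$ for $n>0$. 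A finite adjustment in $\mathfrak{g}_0=\mathrm{span}_{\mathbb{C}}\{L_0,H_0,G^+_0,G^-_0,C\}$ (using that $G^\pm_0,H_0$ preserve $L_0$-weight, and selecting a $G^+_0$-minimal representative in a finite-dimensional $H_0$-eigenspace) then yields a vector additionally annihilated by $G^+_0$, i.e.\ a bona fide $\mathfrak{g}$-highest-weight vector, the desired contradiction. The main obstacle is this last step: because $G^\pm_n$ do not preserve the $\mathfrak{t}$-filtration of $V$, the iterative annihilation has to balance parity and weight changes simultaneously. The template is Mathieu's density argument --- choose the vector whose $\mathfrak{g}_+$-kernel is maximal in a controlled weight range and use the super-commutators above to force genuine $\mathfrak{g}_+$-annihilation --- but in the $N{=}2$ setting both $G^+$ and $G^-$ must be controlled at once, and that bookkeeping is where the real work goes.
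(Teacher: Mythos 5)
Your proposal has a genuine gap at exactly the point you flag as ``where the real work goes'': the conversion of partial annihilation data into an honest $\mathfrak{g}$-highest-weight vector is never carried out, and nothing in your sketch substitutes for it. Starting from a $\mathfrak{t}$-highest-weight subquotient you only get $L_nv, H_nv\in V'$ for a proper $\mathfrak{t}$-submodule $V'$ that is \emph{not} $\mathfrak{g}$-stable (since $V$ is $\mathfrak{g}$-simple), so the odd operators $G^{\pm}_n$ destroy the filtration you want to iterate along; declaring that a ``Mathieu-type density argument'' will balance parity and weight is an announcement, not a proof. The paper's actual mechanism is quite different and concretely avoids this: it shows the joint map $L_n\oplus L_{n+1}\oplus H_n\oplus G^+_n\oplus G^-_n$ is injective on each weight space $V_k$, which yields cuspidality directly with $n=-k$. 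The injectivity proof runs: (i) if $v_0$ is killed by the finite set $\{L_n,L_{n+1},H_n,G^{\pm}_n\}$ with $n>0$, then since every $\ell\gg0$ is $n_1n+n_2(n+1)$ and the bracket relations generate $L_\ell,H_\ell,G^{\pm}_\ell$ from that set, the whole tail $\mathfrak{g}_{[N_0,+\infty)}$ kills $v_0$; (ii) simplicity gives $V_k=U(\mathfrak{g})_0v_0$, and a bound on the negative degree of the finitely many generators of $V_k$ upgrades this to $\mathfrak{g}_{[N,+\infty)}V_k=0$; (iii) climbing with $S=\{L_1,H_1,G^+_1,G^-_1\}$ and using the commutator trick $z=[y,L_{-j}]$, $y\in\mathfrak{g}_N$, lowers the annihilation threshold by one per step, producing $U_{k+N}\neq0$ with $\mathfrak{g}_+U_{k+N}=0$; (iv) a common eigenvector of the commutative algebra $\mathrm{span}_{\mathbb{C}}\{L_0,H_0,C\}$, corrected by $G^+_0$ if necessary, is then a genuine highest-weight vector, contradicting the hypothesis. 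Steps (i)--(iii) are precisely the bookkeeping you defer, and they are the substance of the theorem.

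Two secondary problems. First, your detour through the $\mathfrak{t}$-classification (Theorem \ref{thm-twisted}) is both unnecessary --- the paper never invokes it in this proof --- and flawed as stated: the claim that the $\mathfrak{t}$-composition length of $V$ is bounded by $\dim V_\mu$ fails if highest- or lowest-weight subquotients occur, since those need not meet the weight $\mu$ at all; one would have to work with local composition series \`a la Mathieu to repair it. Second, your opening step (a gap in the support yields a vector killed by a tail $\bigoplus_{n\ge N}\mathfrak{g}_n$, hence a highest-weight vector ``by an extremal eigenvalue argument'') quietly assumes the same hard lemma a second time: passing from tail-annihilation to full $\mathfrak{g}_+$-annihilation is not an extremality triviality here, it is exactly the climbing argument (ii)--(iii) above. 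So the proposal, as written, presupposes the theorem's core content rather than proving it.
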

\begin{proof} Suppose that $V=\sum_{k\in \z}V_k$ is a
simple quasi-finite ${\frak g}$-module without highest and
lowest weights. We shall prove that for any $n\in\z^*$, $k\in\z$,
\begin{eqnarray}\label{s===0}
L_n|_{V_k}\oplus L_{n+1}|_{V_k}\oplus H_{n+2}|_{V_k}\oplus G^+_{n}|_{V_k}\oplus G^-_{n+1}|_{V_k}: V_k\ \to\ V_{k+n}\oplus V_{k+n}\oplus V_{k+n+1}
\end{eqnarray}
is injective. In particular, by taking $n=-k$, we obtain that
$V$ is cuspidal.

In fact, suppose there exists some $v_0\in V_k$ such that
\begin{equation}\label{LLL-1111}L_nv_0=L_{n+1}v_0=G_{n}^+ v_0=G_{n+1}^- v_0=H_{n+2}v_0=0.\end{equation}
Without loss of generality, we can suppose $n>0$. Note that when
$\ell\gg0$, we have
$$\ell=n_1n+n_2(n+1)$$
for some $n_1, n_2\in\N$. From this and the relations in the
definition of $\frak g$, one can easily deduce that $L_\ell, G_{\ell}^\pm, H_\ell$ can be generated by $L_n,L_{n+1}, G_{n}^+, G_{n+1}^-, H_{n+2}$.
Therefore, there exists some $N_0>0$ such that
$$L_\ell v_0=G_{\ell}^\pm v_0=H_{\ell}v_0=0, \forall\ell\ge
N_0.$$
This means \begin{equation}{\frak g}_{[N_0, +\infty)}v_0=0,\label{v00}\end{equation} where ${\frak g}_{[N_0, +\infty)}=\oplus_{l\ge N_0} {\frak g}_l$.
Clearly $V=U(\frak g)v_0$ and $V_k=U(\frak g)_0v_0$. For any $v\in V_k$, there exists $u\in U(\frak g)_0$ such that $v=uv_0$.
Suppose that $$u=\sum a_{-i_1,..., -i_m, i_{m+1}, \cdots, i_n}X_{-i_1}\cdots Y_{-i_m}Z_{i_{m+1}}\cdots W_{i_n},$$ where $ X, \cdots,  Y,  Z \cdots, W \in \{L, H, G^\pm\}, i_1, \cdots, i_n>0$.
Define the negative degree $$n(v):={\rm max}\{i_1+\cdots+i_m\mid a_{-i_1,..., -i_m, i_{m+1}, \cdots, i_n}\ne 0\}.$$ It is well-defined since the sum is finite.
Choose a basis $\{v_1, v_2, \cdots, v_{d_k}\}$ of the finite-dimensional space $V_k$,
and set $n_k:={\rm max}\{n(v_{i})\mid 1\le i\le d_k\}$. Then we have \BE{\frak g}_{[N, +\infty)}V_k=0,\ {\rm where}\  N=N_0+n_k.\EE
Clearly ${\frak g}_+$ is generated by $S:=\{L_1, H_1, G_1^+, G_1^-\}$.
For any $i\ge1$,  set
 $U_{k+1}=SV_k=\sum_{x\in S}xV_k\subset V_{k+1}$. If $U_{k+1}=0$, then there exists a nonzero element $v\in V_k$ such that $Sv=0$ and then ${\frak g}_+v=0$. So $v$ is a highest weight vector, and $V$ is a highest weight module.

Now we can suppose that $U_{k+1}\ne 0$. For any $l\ge N$, we have
${\frak g}_lU_{k+1}=\sum_{x\in S}[{\frak g}_l, x]V_k+\sum_{x\in S}x{\frak g}_lV_k\subset {\frak g}_{l+1}V_k+S{\frak g}_lV_k=0$. Then we get
\BE{\frak g}_{[N, +\infty)}U_{k+1}=0.\EE
Repeating the above steps, we get $U_{k+i+1}=SU_{k+i}$ for any $i\ge 0$, and
\BE{\frak g}_{[N, +\infty)}U_{k+i}=0,\ \forall i\ge 0.\EE
For any $1\le j<N$, $z\in{\frak g}_{N-j}$, there exists $y\in {\frak g}_{N}$ such that $z=[y, L_{-j}]$. Then
$zU_{k+j}=[y, L_{-j}]U_{k+j}=yL_{-j}U_{k+j}-L_{-j}yU_{k+j}\subset yV_{k}-L_{-j}yU_{k+j}=0$.
So \BE{\frak g}_{[N-j, +\infty)}U_{k+j}=0,\ \forall 1\le j<N.\EE
Then ${\frak g}_+U_{k+N}=0$ for some $U_{k+N}\ne 0$.

Since the finite-dimensional subalgebra $\frak{a}:=\span_\c\{L_0, H_0, C\}$ is commutative, there exists a common eigenvector $w$ of $\frak{a}$ in $U_{k+N}$. Replace $w$ by $G_0^+w$ if $G_0^+w\ne 0$, $w$ becomes a highest weight vector of $\frak g$. This contradicts the assumption of the theorem.
\end{proof}

Combining with Theorem \ref{pH1} and Proposition \ref{pn=2},  we obtain our  main result of this paper.

\begin{theo} \label{main}
Let $V$ be a simple weight ${\frak g}$-module with finite-dimensional weight spaces. Then $V$ is a highest weight module, a lowest weight module, or ${\mathcal R}_{a, b, c}'$ for some $a, b, c\in\c$ $($up to parity-change$)$.
\end{theo}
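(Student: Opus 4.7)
The proof plan is a direct assembly of the pieces developed in the preceding sections, so essentially everything has been set up. First I would invoke Theorem \ref{pH1}: if the simple weight module $V$ is neither a highest nor a lowest weight module, then $V$ must be cuspidal, i.e.\ the weight multiplicities are uniformly bounded. This reduces the main theorem to the classification of simple cuspidal $\frak g$-modules.

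Next I would apply Proposition \ref{pn=2}, which controls the length: any simple cuspidal $\frak g$-module $V$ satisfies $\dim(V_i)_\tau\le 2$ for all $i\in\z$ and $\tau\in\z_2$. So $V$ has length $n\in\{1,2\}$, and the diagram
$$(G^-)^iU^+=(G^+)^{n-i}U^-,\qquad 0\le i\le n,$$
from the proof of Proposition \ref{pn=2} decomposes $V$ into the pieces analyzed in Section 4.2. The only subtle point to make explicit here is that the length-$0$ case is ruled out by the standing assumption that modules are nontrivial together with the fact that if $G^-U^+=0$, the $\frak q^+$-submodule $U^+$ would already be $\frak g$-stable and hence equal to $V$, reducing the analysis to a trivial module.

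Then I would quote the two structure propositions. In the length-$1$ case, the module $V$ is isomorphic (up to parity change) to the simple sub-quotient of some ${\mathcal R}_{a,b}$, and in the length-$2$ case to the simple sub-quotient of some ${\mathcal R}_{a,b,c}$ with $2b\pm c\ne 2$. Using the remark after Proposition \ref{n=2structure}, both families are uniformly packaged by the notation ${\mathcal R}_{a,b,c}'$ for the simple sub-quotient of ${\mathcal R}_{a,b,c}$: when $2b\pm c=2$ the length-$2$ module ${\mathcal R}_{a,b,c}$ degenerates precisely to (a parity shift of) an ${\mathcal R}_{a,b}$, so the length-$1$ modules are automatically included. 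Thus $V\cong{\mathcal R}_{a,b,c}'$ up to parity change, finishing the proof.

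I do not expect a genuine obstacle at this stage; the work has already been done. The only care required is bookkeeping: making sure that the ``up to parity change'' caveat is carried through uniformly (since the parity-change functor $\Pi$ commutes with all the constructions), and that the convention ${\mathcal R}_{a,b,c}'$ consistently means the simple sub-quotient in every case, including the length-$1$ collapses and the trivial-module degeneration at $(a,b,c)=(0,1,0)$ or $(1/2,1,0)$. With that bookkeeping in hand, the theorem follows immediately by combining Theorem \ref{pH1}, Proposition \ref{pn=2}, and the structure results of Section 4.2.
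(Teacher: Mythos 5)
Your proposal is correct and coincides with the paper's own argument: the paper obtains Theorem \ref{main} in exactly this way, by combining Theorem \ref{pH1} (reduction of non-highest/non-lowest weight modules to the cuspidal case) with Proposition \ref{pn=2} (length at most $2$) and the structure results of Section 4.2, where the remark following Proposition \ref{n=2structure} packages both the length-$1$ and length-$2$ families into the uniform notation ${\mathcal R}_{a,b,c}'$. Your additional bookkeeping (ruling out the degenerate case $G^-U^+=0$ via the standing nontriviality convention, and carrying the parity-change functor $\Pi$ through) matches the parenthetical remarks already present in the paper's proofs.
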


\begin{rema}
This research was originally announced on April 2019 by arXiv:1904.08578. Recently \cite{XL} and \cite{BFIK} successively obtained a classification result for the Lie superalgebra $W(m, n)$ by a different method.

\end{rema}

\vskip30pt \centerline{\bf ACKNOWLEDGMENTS}

\vskip15pt We gratefully acknowledge the partial financial support from the NSFC (Nos. 12071405, 11971315, 12171155), and the Jiangsu Natural Science Foundation (No. BK20171294). Part of this work was done during the author's visiting the Chern Institute of Mathematics, Tianjin, China. The authors would like to thank the institute and Prof. Chengming Bai for their warm hospitality and support, and also deeply indebted to Prof. Rencai L\"u for helpful discussions.

\def\refname{\centerline{\bf REFERENCES}}

\end{document}